\newtheorem{theorem}{Theorem}[section]
\newtheorem{def-prop}[theorem]{Definition-Proposition}
\newtheorem{prop}[theorem]{Proposition}
\newtheorem{conj}[theorem]{Conjecture}
\newtheorem{lemma}[theorem]{Lemma}
\newtheorem{cor}[theorem]{Corollary}
\theoremstyle{definition}
\newtheorem{defin}[theorem]{Definition}
\theoremstyle{remark}
\DeclareMathOperator{\BZ}{BZ}
\DeclareMathOperator{\wt}{wt}
\DeclareMathOperator{\Supp}{Supp}
\newcommand{\R}{\mathbb{R}}
\newcommand{\Yibo}[1]{\todo[size=\tiny,inline,color=yellow!30]{#1 \\ \hfill --- Yibo}}
\title{Degrees of the stretched Kostka quasi-polynomials}
\author{Shiliang Gao}
\address{Department of Mathematics, University of Illinois at Urbana-Champaign, Urbana, IL 61801}
\email{\href{mailto:sgao23@illinois.edu}{{\tt sgao23@illinois.edu}}}
\author{Yibo Gao}
\address{Department of Mathematics, University of Michigan, \mbox{Ann Arbor, MI 48109}}
\email{\href{mailto:gaoyibo@umich.edu}{{\tt gaoyibo@umich.edu}}}
\date{\today}
\begin{document}

\begin{abstract}
We provide a type-uniform formula for the degree of the stretched Kostka quasi-polynomial $K_{\lambda,\mu}(N)$ in all classical types, improving a previous result by McAllister in $\mathfrak{sl}_r(\mathbb{C})$. Our proof relies on a combinatorial model for the weight multiplicity by Berenstein and Zelevinsky.
\end{abstract}
\maketitle

\section{Introduction} \label{sec:intro}
The Kostka numbers $K_{\lambda,\mu}$ have great importance in many branches of mathematics, including representation theory, algebraic geometry, symmetric function theory, polytope theory and so on. They can be defined as the dimension of the weight subspace of weight $\mu$ in the irreducible representation $V_{\lambda}$ with highest weight $\lambda$. In type $A$ (i.e. $\mathfrak{sl}_r(\mathbb{C})$), they are also well-known as the coefficient of the expansion of Schur functions $s_{\lambda}$ into monomial symmetric functions $m_{\mu}$, the number of semistandard Young tableaux of shape $\lambda$ and weight $\mu$, the number of lattice points in the Gelfand-Tsetlin polytope $\mathrm{GT}_{\lambda}$ of weight $\mu$, and many others \cite{YoungTableaux,EC2}.

One method to study a family of numbers is to organize them together. Define the \emph{stretched Kostka quasi-polynomial} via $K_{\lambda,\mu}(N)=K_{N\lambda,N\mu}$ (Definition~\ref{def:kostka-quasi}), which is known to be a quasi-polynomial. In type $A$, it is a polynomial and can be investigated via Ehrhart theory on the Gelfand-Tsetlin polytope, which enjoys rich combinatorial and polyhedral geometric properties including its face structures \cite{an2018fvectors,deloera2004vertices,mcallister2006thesis}, its connection to order polytopes \cite{ardila2011Gelfand,stanley1986two} , its combinatorial diameter and automorphism group \cite{gao2019diameter}. In particular, McAllister utilized the combinatorics of the Gelfand-Tsetlin polytope to compute the degree of the stretched Kostka polynomial in type $A$ \cite{M08}, resolving a conjecture by King, Tollu, and Toumazet \cite{KTT04}. However, although an analogue of Gelfand-Tsetlin polytopes is defined in other classical types \cite{BZ88}, which we call \emph{Berenstein-Zelevinsky} polytope, its various properties are not so well-understood.

In this paper, we provide a type-uniform formula for the degree of the stretched Kostka quasi-polynomial $K_{\lambda,\mu}(N)$ in all classical types (Theorem~\ref{thm:main}), improving a result by McAllister \cite{M08}. Along the way, we study relevant properties of the corresponding Berenstein-Zelevinsky polytope, especially its interior points. In the remainder of the section, we state our main result (Theorem~\ref{thm:main}). We provide further background in Section~\ref{sec:background} and prove the main result in Section~\ref{sec:proof}.

We adopt the following notations throughout the paper. More background will be provided in Section~\ref{sec:background}. Let $\mathfrak{g}$ be a complex semisimple Lie algebra with Cartan subalgebra $\mathfrak{h}$ and root system $\Phi\subset\mathfrak{h}^{*}$. Let $r=\dim\mathfrak{g}$, which is also the rank of $\Phi$, denoted $\mathrm{rk}(\Phi)$. Let $\Phi_+$ be the positive roots,  $\Pi=\{\alpha_1,\ldots,\alpha_r\}$ be the simple roots and $\{\omega_1,\ldots,\omega_r\}$ be the corresponding fundamental weights. For dominant weights $\lambda$ and $\mu$, let $V_{\lambda}$ be the irreducible finite-dimensional representation of $\mathfrak{g}$ with highest weight $\lambda$, and let $V_{\lambda}(\mu)\subset V_{\lambda}$ be its subspace of weight $\mu$. 
\begin{defin}\label{def:kostka-quasi}
The \emph{stretched Kostka quasi-polynomial} (in $N$) is defined as \[K_{\lambda,\mu}(N):=\dim V_{N\lambda}(N\mu).\] 
\end{defin}
It is known that $K_{\lambda,\mu}(N)$ is a quasi-polynomial in general, which is a polynomial in type $A$. It is also a classical result that $\dim V_{\lambda}(\mu)\neq0$ if and only if $\lambda-\mu$ is a nonnegative integral linear combination of the simple roots \cite{BZ88}.

\begin{theorem}\label{thm:main}
Let $\mathfrak{g}$ be a complex semisimple Lie algebra of classical type and dimension $r$. For dominant weights $\lambda,\mu\in\mathfrak{h}^{*}$ such that $\lambda-\mu=\sum_{i=1}^r c_i\alpha_i$ with $c_i\in\mathbb{Z}_{\geq0}$ for all $i$, write $\lambda=\sum_{i=1}^r d_i\omega_i$ where $d_i\in\mathbb{Z}_{\geq0}$ for all $i$. Let
\[\begin{cases}
\Phi^{(1)}\subset\Phi\text{ be the root subsystem spanned by }\{\alpha_i\:|\: c_i\neq0\},\\
\Phi^{(2)}\subset\Phi^{(1)}\text{ be the root subsystem spanned by }\{\alpha_i\:|\: c_i\neq0,d_i=0.\}
\end{cases}\]
Then the degree of the stretched Kostka quasi-polynomial equals
\[\deg K_{\lambda,\mu}(N)=|\Phi^{(1)}_+|-\mathrm{rk}(\Phi^{(1)})-|\Phi^{(2)}_+|.\]
\end{theorem}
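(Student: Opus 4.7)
The plan is to interpret $K_{\lambda,\mu}(N)$ as the Ehrhart quasi-polynomial of the Berenstein--Zelevinsky polytope $\BZ_{\lambda,\mu}$. Since the BZ construction satisfies $\BZ_{N\lambda,N\mu}=N\cdot\BZ_{\lambda,\mu}$ (a point to be recalled from Section~\ref{sec:background}), a standard fact from Ehrhart theory gives
\[
\deg K_{\lambda,\mu}(N)\;=\;\dim\BZ_{\lambda,\mu}.
\]
It therefore suffices to compute this dimension and show that it equals $|\Phi^{(1)}_+|-\mathrm{rk}(\Phi^{(1)})-|\Phi^{(2)}_+|$.

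To evaluate the dimension, I would work with the explicit BZ coordinates $x_\beta$ indexed by $\beta\in\Phi_+$, subject to nonnegativity, the BZ inequalities, and the weight equations imposed by $\lambda-\mu=\sum_i c_i\alpha_i$. The $i$-th weight equation expresses $c_i$ as a nonnegative linear combination of the variables supported along $\alpha_i$; when $c_i=0$, all such variables are forced to zero, so $\BZ_{\lambda,\mu}$ is contained in the coordinate subspace indexed by $\Phi^{(1)}_+$. This reduces the ambient dimension to $|\Phi^{(1)}_+|$ and leaves exactly $\mathrm{rk}(\Phi^{(1)})$ nontrivial weight equations, which are linearly independent and drop the dimension further by $\mathrm{rk}(\Phi^{(1)})$.

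Next I would analyze how the hypothesis $d_i=0$ for $\alpha_i$ in $\Phi^{(1)}$ interacts with the BZ inequalities. The intuition is that when $\lambda$ has no $\omega_i$-component, certain BZ inequalities along the $i$-string emanating from $\alpha_i$ tighten into equalities of the form $x_\beta=0$; after carrying out this deduction case by case in the classical types, the positive roots $\beta$ so forced to vanish assemble into exactly $\Phi^{(2)}_+$. This contributes an extra codimension of $|\Phi^{(2)}_+|$ and yields the upper bound $\dim\BZ_{\lambda,\mu}\le|\Phi^{(1)}_+|-\mathrm{rk}(\Phi^{(1)})-|\Phi^{(2)}_+|$.

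The matching lower bound is the main obstacle: I would need to exhibit a point in the relative interior of the affine flat cut out above, namely a rational BZ datum for $(\lambda,\mu)$ that is strictly positive on $\Phi^{(1)}_+\setminus\Phi^{(2)}_+$ and satisfies every remaining BZ inequality strictly. Constructing such an interior point in a type-uniform way is delicate, since the BZ inequalities differ between types $A$, $B$, $C$, $D$; the natural approach is to begin with a small rational lattice point supported on the $\Phi^{(1)}$-reduction of $\lambda-\mu=\sum c_i\alpha_i$ and then add a small positive perturbation in the unforced directions, checking that the perturbation respects all BZ inequalities. Verifying this case by case in each classical type is where the bulk of the combinatorial work lies, and corresponds to the analysis of the interior of $\BZ_{\lambda,\mu}$ advertised in the introduction.
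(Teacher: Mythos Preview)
Your high-level strategy---identify $\deg K_{\lambda,\mu}(N)$ with $\dim\BZ_{\lambda,\mu}$ via Ehrhart theory and then compute that dimension---matches the paper. But the execution you sketch diverges from the paper's in ways that create real gaps.

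First, the BZ polytope of this paper is \emph{not} coordinatized by variables $x_\beta$, $\beta\in\Phi_+$, with a weight equation of the shape ``$c_i=\sum_{\beta\ni\alpha_i} x_\beta$''. The coordinates are the Gelfand--Tsetlin--type entries $\{\lambda_{i,j},\eta_{i,j}\}$, and the weight constraints read $\mu_i=|\lambda_i|+|\lambda_{i+1}|-2|\eta_i|$. There is no transparent way in which $c_i=0$ forces a batch of BZ variables to vanish, so your reduction to $\Phi^{(1)}$ by ``setting variables to zero'' does not go through in this model. The paper instead performs this reduction \emph{representation-theoretically}: it rewrites $K_{\lambda,\mu}$ as a tensor-product multiplicity (Lemma~\ref{lemma:Kostkatomult}), invokes the Berenstein--Zelevinsky subalgebra result (Lemma~\ref{lemma:multg0}), and thereby reduces to a simple primitive pair for a smaller Lie algebra (Proposition~\ref{prop:primred}, Corollary~\ref{cor:reduction}). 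Only after this reduction does one work inside a BZ polytope, now with $\Phi^{(1)}=\Phi$.

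Second, once in the primitive case, the paper does \emph{not} analyze $\BZ_{\lambda,\mu}$ directly. It first computes $\dim\BZ_\lambda=|\Phi_+|-|\Phi'_+|$ (Lemma~\ref{lem:dimBZ}), where the effect of $d_i=0$ is that certain entries are \emph{fixed to specific values} (often nonzero, e.g.\ equal to $\lambda_{1,t}$), not forced to $0$ as you suggest. Then, rather than constructing an interior point of $\BZ_{\lambda,\mu}$ by hand, the paper shows that $\mu$ lies in the interior of the weight polytope $P_\lambda=\wt(\BZ_\lambda)$ and applies the projection--interior lemma $(\pi(P))^\circ\subset\pi(P^\circ)$ (Lemma~\ref{lem:projection-interior}) to produce some $T\in\BZ_\lambda^\circ$ with $\wt(T)=\mu$; intersecting with the $r$ independent weight hyperplanes then gives $\dim\BZ_{\lambda,\mu}=\dim\BZ_\lambda-r$ (Lemma~\ref{lem:dimBZlammu}). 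This cleanly separates the ``$d_i=0$'' analysis from the ``slice by weight'' analysis, and avoids the delicate direct construction of an interior point in $\BZ_{\lambda,\mu}$ that you flag as the main obstacle.

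In short: same destination, but your route leans on a coordinatization and vanishing mechanism that the BZ-pattern model does not supply. The paper's detour through the representation-theoretic reduction to primitive pairs, and through $\dim\BZ_\lambda$ plus the weight-polytope interior argument, is what makes the argument go through.
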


\begin{conj}\label{conj:main}
Theorem~\ref{thm:main} holds for all complex semisimple Lie algebras (of arbitrary types). 
\end{conj}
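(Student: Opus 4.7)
The plan is to interpret $K_{\lambda,\mu}(N)$ as the Ehrhart quasi-polynomial of the Berenstein-Zelevinsky polytope $\BZ_{\lambda,\mu}$, to be recalled in Section~\ref{sec:background}; since the degree of the Ehrhart quasi-polynomial of a rational polytope equals its dimension, proving Theorem~\ref{thm:main} amounts to computing
\[
\dim \BZ_{\lambda,\mu} \;=\; |\Phi^{(1)}_+| - \mathrm{rk}(\Phi^{(1)}) - |\Phi^{(2)}_+|.
\]

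For the upper bound, I would analyze the defining linear (in)equalities of $\BZ_{\lambda,\mu}$, whose coordinates are naturally indexed by positive roots (or equivalently by the combinatorial data of a BZ pattern). The weight condition $\lambda - \mu = \sum c_i \alpha_i$ is built into the polytope, and whenever $c_i = 0$ a row of BZ relations collapses and forces vanishing of all coordinates attached to roots outside of $\Phi^{(1)}_+$; on the surviving $|\Phi^{(1)}_+|$-many coordinates the weight equation still imposes $\mathrm{rk}(\Phi^{(1)})$ independent linear conditions. If in addition some $d_i = 0$ with $c_i\neq 0$, then certain shape-dependent BZ inequalities become equalities, forcing further coordinates attached to roots of $\Phi^{(2)}_+$ to vanish. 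Summing these codimensions yields the claimed upper bound.

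For the matching lower bound, I would construct explicitly a BZ pattern of shape $\lambda$ and weight $\mu$ lying in the relative interior of $\BZ_{\lambda,\mu}$ with respect to the affine span just identified; equivalently, a pattern for which every BZ inequality not forced to be an equality above is strict. Perturbing this pattern along the unconstrained coordinates then produces enough affinely independent points in $\BZ_{\lambda,\mu}$ to realize the desired dimension, completing the proof.

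The hardest step, I expect, will be the interior-point construction in types $B$, $C$, and $D$: unlike the Gelfand-Tsetlin polytope in type $A$, the BZ polytope in other classical types is less symmetric and its face structure is not fully documented, so a careful and partly type-specific analysis will be required to pin down exactly which inequalities are forced to be equalities and to check that a well-chosen candidate pattern makes all remaining inequalities strict. I anticipate that the combinatorics of $i$-trails of Berenstein and Zelevinsky, combined with folding arguments and natural embeddings relating the classical root systems, will be the technical tools that allow the final formula to be stated in a type-uniform way even though intermediate arguments remain type-dependent.
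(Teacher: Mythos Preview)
The statement you are attempting to prove is Conjecture~\ref{conj:main}, which the paper explicitly leaves open: the paper only establishes Theorem~\ref{thm:main} for classical types and states the extension to arbitrary types as a conjecture. So there is no ``paper's own proof'' to compare against.

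More importantly, your proposal does not actually address the content of the conjecture. Everything you outline --- interpreting $K_{\lambda,\mu}(N)$ via the BZ polytope of \cite{BZ88}, bounding $\dim\BZ_{\lambda,\mu}$ from above by counting forced equalities, and matching it from below by constructing an interior point with case analysis in types $B$, $C$, $D$ --- is precisely the strategy the paper uses to prove Theorem~\ref{thm:main} for the \emph{classical} types. The BZ-pattern model of \cite{BZ88} that you invoke is only defined for classical types, and your discussion of ``the hardest step'' explicitly names only types $B$, $C$, $D$. The passing mention of $i$-trails at the end is not developed into a plan: you do not say which polytope model you would use in types $E_6,E_7,E_8,F_4,G_2$, how its coordinates are indexed, or how the forced-equality and interior-point arguments would go there. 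In short, your proposal is a (correct) sketch of the paper's proof of Theorem~\ref{thm:main}, not a proof of Conjecture~\ref{conj:main}; the exceptional types, which are the entire new content of the conjecture, are not treated.
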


\section{Background}\label{sec:background}
We discuss more background on root systems, and then talk about a model in \cite{BZ88} that allows us to compute $K_{\lambda,\mu}(N)$ in classical types.
\subsection{Root system and Weyl groups}
Following Section~\ref{sec:intro}, we continue with more background on root systems. Recall that the positive roots $\Pi=\{\alpha_1,\ldots,\alpha_r\}$ form a basis of our ambient vector space $\mathfrak{h}^{*}$, which is also called the \emph{weight space}. It is sometimes more convenient to identify this space with its dual, by equipping it with an inner product $\langle-,-\rangle$. The \emph{simple coroots} are defined as \[\alpha_i^{\vee}=\frac{2\alpha_i}{\langle\alpha_i,\alpha_i\rangle}\]
for $i=1,\ldots,r$. And the fundamental weights $\{\omega_1,\ldots,\omega_r\}$ are defined as the dual basis to the coroots, i.e. $\langle\alpha_i^{\vee},\omega_j\rangle=\delta_{i,j}$ where $\delta$ is the Kronecker delta. A weight $\lambda\in\mathfrak{h}^*$ is an \emph{integral weight} if $\lambda$ is written as $\sum_{i=1}^rd_i\omega_i$ where $d_i\in\mathbb{Z}$ for all $i$. All integral weights form the \emph{weight lattice} $Q$. We also say that a weight $\lambda=\sum_{i=1}d_i\omega_i$ is an \emph{dominant integral weight} if $d_i\in\mathbb{Z}_{\geq0}$ for all $i$. Let $\Lambda^+$ be the set of all dominant integral weights.  

We define a partial order, called the \emph{root poset}, on the positive roots $\Phi_+$, where $\alpha\leq\beta$ if $\beta-\alpha$ is a nonnegative (integral) linear combination of the simple roots $\Pi$. Thus, the minimal elements of the root poset $\Phi_+$ are precisely the simple roots. Define the \emph{support} of a positive root $\alpha$ to be \[\Supp(\alpha):=\{\alpha_i\in\Pi\:|\: \alpha_i\leq \alpha\}\subset\Pi.\]
We also say that a positive root $\alpha$ is \emph{supported on} $\alpha_i\in\Pi$ if $\alpha_i\in\Supp(\alpha)$. 

Similarly, for $\lambda,\mu\in \Lambda^+$, we say that $\lambda$ \emph{dominates} $\mu$, if
\begin{equation}\label{eqn:lambda-mu}
    \lambda-\mu = \sum_{i=1}^r c_i\alpha_i,
\end{equation}
for some $c_i\in\mathbb{Z}_{\geq0}$. 
We say that a pair $(\lambda,\mu)$ is \emph{primitive} if $c_i\in\mathbb{Z}_{>0}$ for all $i$ in \eqref{eqn:lambda-mu}. We call a primitive pair \emph{simple} if the underlying Lie algebra is simple. 

We adopt the following conventions on the root systems for classical types.
\begin{itemize}
\item Type $A_{r-1}$ ($\mathfrak{g} = \mathfrak{sl}_r$): $\Phi=\{e_i-e_j\:|\: 1\leq i<j\leq r\}$, $\Phi_+=\{e_i-e_j\:|\: 1\leq i<j\leq r\}$, $\alpha_i=e_i-e_{i+1}$ and $\omega_i=e_1+\cdots+e_i$ for $i=1,\ldots,r-1$. 
\item Type $B_r$ ($\mathfrak{g} = \mathfrak{so}_{2r+1}$): $\Phi=\{\pm e_i\pm e_j\:|\:1\leq i<j\leq r\}\cup\{\pm e_i\:|\: 1\leq i\leq r\}$, $\Phi_+=\{e_i\pm e_j\:|\: 1\leq i<j\leq r\}\cup\{e_i\:|\: 1\leq i\leq r\}$, $\alpha_i=e_i-e_{i+1}$ and $\omega_i = e_1+\cdots+e_i$ for $i=1,\ldots,r-1$, and $\alpha_r=e_r$ and $\omega_r=\frac{1}{2}(e_1+\cdots+e_r)$.
\item Type $C_r$ ($\mathfrak{g} = \mathfrak{sp}_{2r}$): $\Phi=\{\pm e_i\pm e_j\:|\:1\leq i<j\leq r\}\cup\{\pm 2e_i\:|\: 1\leq i\leq r\}$, $\Phi_+=\{e_i\pm e_j\:|\: 1\leq i<j\leq r\}\cup\{2e_i\:|\: 1\leq i\leq r\}$, $\alpha_i=e_i-e_{i+1}$ for $i = 1,\ldots,r-1$ and $\alpha_r = 2e_r$, and $\omega_i=e_1+\cdots+e_i$ for $i=1,\ldots,r$.
\item Type $D_r$ ($\mathfrak{g} =\mathfrak{so}_{2r}$): $\Phi=\{\pm e_i\pm e_j\:|\:1\leq i<j\leq r\}$, $\Phi_+=\{e_i\pm e_j\:|\: 1\leq i<j\leq r\}$, $\alpha_i=e_i-e_{i+1}$ for $i=1,\ldots,r-1$, $\alpha_r=e_{r-1}+e_r$, $\omega_i=e_1+\cdots+e_{i}$ for $i=1,\ldots,r-2$, $\omega_{r-1}=\frac{1}{2}(e_1+\cdots+e_{r-1}-e_r)$ and $\omega_{r}=\frac{1}{2}(e_1+\cdots+e_r)$. 
\end{itemize}

\subsection{Kostka coefficients}

Here we review some basic notions and results related to Kostka coefficients. The Kostka coefficients can be computed via the \emph{Kostant multiplicity formula}, and can be expressed as tensor product multiplicities.
\begin{defin}
The \emph{Kostant partition function} is defined by
\begin{equation}
    P(\lambda-\mu) := \#\{(m_{\alpha})_{\alpha\in \Phi^+}:m_{\alpha}\in \mathbb{Z}_{\geq 0},\sum_{\alpha\in \Phi^+}m_{\alpha}\alpha = \lambda-\mu\}.
\end{equation}
\end{defin}

For a pair $\lambda,\mu\in \Lambda^+$ such that $\lambda$ dominates $\mu$,
the \emph{Kostant multiplicity formula} express $K_{\lambda,\mu}$ as alternate sum of Kostant partition functions:
\begin{theorem}\label{thm:Kostantmult}
    $K_{\lambda,\mu} = \sum_{w\in W}\det(w)P(w(\lambda+\rho)-\mu-\rho)$,
where $W$ is the Weyl group and $\rho = \frac{1}{2}\sum_{\alpha\in \Phi^+}\alpha$. 
\end{theorem}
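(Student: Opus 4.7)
The plan is to derive the Kostant multiplicity formula by comparing two expressions for the formal character of $V_\lambda$. Working in a suitable completion of the group algebra $\mathbb{Z}[Q]$, the character is $\chi_\lambda = \sum_\mu \dim V_\lambda(\mu)\, e^\mu$, and by definition the coefficient of $e^\mu$ on the left is $K_{\lambda,\mu}$ when $\mu \in \Lambda^+$. I would invoke the Weyl character formula
\begin{equation*}
\chi_\lambda = \frac{\sum_{w \in W} \det(w)\, e^{w(\lambda+\rho)}}{\sum_{w \in W} \det(w)\, e^{w\rho}}
\end{equation*}
together with the Weyl denominator identity $\sum_{w \in W} \det(w)\, e^{w\rho} = e^\rho \prod_{\alpha \in \Phi^+} (1 - e^{-\alpha})$, to rewrite
\begin{equation*}
\chi_\lambda = \left(\sum_{w \in W} \det(w)\, e^{w(\lambda+\rho) - \rho}\right)\cdot \prod_{\alpha \in \Phi^+} \frac{1}{1 - e^{-\alpha}}.
\end{equation*}

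Next I would expand each factor geometrically, $(1 - e^{-\alpha})^{-1} = \sum_{m \geq 0} e^{-m\alpha}$, and identify the resulting product as the generating function of the Kostant partition function,
\begin{equation*}
\prod_{\alpha \in \Phi^+} \frac{1}{1 - e^{-\alpha}} = \sum_{\nu \in \mathbb{Z}_{\geq 0}\Phi_+} P(\nu)\, e^{-\nu},
\end{equation*}
which follows directly from the definition of $P$. Substituting this into the formula for $\chi_\lambda$ and extracting the coefficient of $e^\mu$ on both sides, the constraint $w(\lambda+\rho) - \rho - \nu = \mu$ forces $\nu = w(\lambda+\rho) - \mu - \rho$ for each Weyl group element $w$, so one obtains
\begin{equation*}
K_{\lambda,\mu} = \sum_{w \in W} \det(w)\, P\bigl(w(\lambda+\rho) - \mu - \rho\bigr),
\end{equation*}
exactly as claimed. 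Note that $P$ vanishes on arguments not lying in $\mathbb{Z}_{\geq 0}\Phi_+$, so the sum is effectively finite.

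The main obstacle is entirely foundational: the product $\prod_{\alpha \in \Phi^+}(1 - e^{-\alpha})^{-1}$ is an infinite series and the Weyl character formula as written is a quotient of elements of $\mathbb{Z}[Q]$, so one must work in a completion in which these manipulations are legitimate. The standard remedy is to pass to the ring of formal sums $\sum_\mu a_\mu e^\mu$ whose support is contained in a finite union of downward cones $\lambda_i - \mathbb{Z}_{\geq 0}\Phi_+$; in this completion $e^\rho \prod(1 - e^{-\alpha})$ becomes invertible, $\chi_\lambda$ and the generating series for $P$ both lie inside, and the coefficient-matching step above is justified term by term. Once this framework is in place, no combinatorial input beyond the Weyl character and denominator formulas is required.
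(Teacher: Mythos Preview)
Your argument is correct and is precisely the classical derivation of Kostant's multiplicity formula from the Weyl character formula and the Weyl denominator identity. There is nothing to compare against in the paper itself: Theorem~\ref{thm:Kostantmult} is stated there as background without proof, so your write-up supplies what the paper simply cites.
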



Consider the root space decomposition of $\mathfrak{g}$:
\begin{equation}
    \mathfrak{g} = \mathfrak{h}\oplus \bigoplus_{\alpha\in \Phi}\mathfrak{g}_{\alpha}.
\end{equation}
For $\alpha\in \Pi$, let $e_{\pm\alpha}\in \mathfrak{g}_{\pm\alpha}$ and $h_\alpha = [e_\alpha,e_{-\alpha}]\in \mathfrak{h}$ be the standard generator. 
For each weight space $V_{\lambda}(\beta)\subset V_{\lambda}$ and highest weight $\nu = \sum_{\alpha \in \Pi}n_{\alpha}\alpha$, let $V_{\lambda}(\beta,\nu)\subseteq V_{\lambda}(\beta)$ be the subspace defined by
\begin{equation}\label{eqn:defmult}
    V_{\lambda}(\beta,\nu):= \{v\in V_{\lambda}(\beta):e_{\alpha}^{n_\alpha+1}v = 0\text{ for }\alpha\in \Pi\}.
\end{equation}
Let $c_{\lambda,\nu}^{\mu}$ be the multiplicity of $V_{\mu}$ in the tensor product $V_{\lambda}\otimes V_{\nu}$. It is known (see e.g.\cite{Z73}) that $c^{\mu}_{\lambda,\nu}$ can be realized as the dimension of certain subspace of $V_{\lambda}$:
\begin{equation}\label{eqn:Oct13}
    c^{\mu}_{\lambda,\nu} = \dim(V_\lambda(\mu-\nu,\nu)).
\end{equation}

\begin{lemma}\label{lemma:Kostkatomult}
For $\lambda,\mu\in \Lambda^+$ such that $\lambda$ dominates $\mu$, let $c_i$'s be as in \eqref{eqn:lambda-mu} and set
\[\nu = \sum_{i = 1}^r c_i\omega_i \text{ and }\mu' = \nu+\mu.\]
Then $K_{\lambda,\mu} = c^{\mu'}_{\lambda,\nu}$.
\end{lemma}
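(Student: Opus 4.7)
The plan is to reduce Lemma~\ref{lemma:Kostkatomult} to showing that the subspace $V_\lambda(\mu,\nu)$ from \eqref{eqn:defmult} coincides with the full weight space $V_\lambda(\mu)$, and then to dispatch this equality by a short weight-theoretic argument.

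First I would rewrite $c^{\mu'}_{\lambda,\nu}$ using \eqref{eqn:Oct13}: since $\mu'-\nu=\mu$ by construction, we have $c^{\mu'}_{\lambda,\nu}=\dim V_\lambda(\mu,\nu)$. Comparing with $K_{\lambda,\mu}=\dim V_\lambda(\mu)$, the lemma becomes the equality of subspaces
\[V_\lambda(\mu,\nu)=V_\lambda(\mu).\]
The inclusion $V_\lambda(\mu,\nu)\subseteq V_\lambda(\mu)$ is immediate from the definition, so only the reverse inclusion requires work.

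Unpacking \eqref{eqn:defmult} with our choice $\nu=\sum_{i=1}^r c_i\omega_i$, the integer attached to each simple root $\alpha_i$ is $n_{\alpha_i}=c_i$. Thus the reverse inclusion asks: for every $v\in V_\lambda(\mu)$, does $e_{\alpha_i}^{c_i+1}v=0$ hold? I would verify this by a pure weight count. The vector $e_{\alpha_i}^{c_i+1}v$ lies in the weight space of weight $\mu+(c_i+1)\alpha_i$, and
\[\lambda-\bigl(\mu+(c_i+1)\alpha_i\bigr)=-\alpha_i+\sum_{j\neq i}c_j\alpha_j\]
has a strictly negative coefficient at $\alpha_i$. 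Since every weight of $V_\lambda$ differs from $\lambda$ by a nonnegative integer combination of simple roots, the weight space $V_\lambda\bigl(\mu+(c_i+1)\alpha_i\bigr)$ is zero, forcing $e_{\alpha_i}^{c_i+1}v=0$.

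There is no genuine obstacle here: the lemma amounts to a bookkeeping translation between Kostka numbers and tensor product multiplicities, using the Zelevinsky-type realization \eqref{eqn:Oct13}. The only subtlety to watch is the convention in \eqref{eqn:defmult}, namely that the exponent $n_{\alpha_i}+1$ corresponds to the fundamental-weight coordinate of $\nu$, which in our setting is exactly $c_i$; once this identification is made, the weight inequality above finishes the argument.
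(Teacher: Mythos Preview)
Your proposal is correct and follows essentially the same route as the paper: both arguments use \eqref{eqn:Oct13} to identify $c^{\mu'}_{\lambda,\nu}$ with $\dim V_\lambda(\mu,\nu)$, and then verify $V_\lambda(\mu,\nu)=V_\lambda(\mu)$ by observing that $e_{\alpha_i}^{c_i+1}v$ lands in the weight space $V_\lambda(\mu+(c_i+1)\alpha_i)$, which vanishes because $\lambda-(\mu+(c_i+1)\alpha_i)$ has a negative $\alpha_i$-coefficient. Your remark about the convention in \eqref{eqn:defmult} (that $n_{\alpha_i}$ is the fundamental-weight coordinate of $\nu$, hence equals $c_i$) is apt, since the paper's displayed expression $\nu=\sum_{\alpha\in\Pi}n_\alpha\alpha$ appears to be a typo for the fundamental-weight expansion.
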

\begin{proof}
For any $v\in V_{\lambda}(\mu)$ and any $i\in [r]$,
\[e_{\alpha_i}^{c_{i}+1}v\in V_{\lambda}(\mu+(c_i+1)\alpha_i).\]
Since $\lambda-\mu-(c_i+1)\alpha_i = (\sum_{j\neq i}c_j\alpha_j)-\alpha_i$,
$\lambda$ does not dominate $\mu+(c_i+1)\alpha_i$ for any $i\in [r]$. Therefore 
$$\dim(V_{\lambda}(\mu+(c_i+1)\alpha_i)) = 0 \text{ and }e_{\alpha_i}^{c_{i}+1}v = 0 \text{ for all } i\in [r].$$ 
Now by \eqref{eqn:defmult},
\[V_{\lambda}(\mu'-\nu,\nu) = \{v\in V_{\lambda}(\mu):e_{\alpha_i}^{c_{i}+1}v = 0\text{ for all }i\in [r]\} = V_{\lambda}(\mu).\]
We are then done by \eqref{eqn:Oct13} as
\[K_{\lambda,\mu} = \dim(V_{\lambda}(\mu)) = \dim(V_{\lambda}(\mu'-\nu,\nu)) = c^{\mu'}_{\lambda,\nu}.\qedhere\]
\end{proof}

\subsection{BZ-patterns}
\begin{defin}\label{def:BZ-pattern}
A real \emph{BZ-pattern} in type $B_r/C_r$ (where BZ stands for Berenstein-Zelevinsky) of highest weight $\lambda=\lambda_{11}e_1+\cdots+\lambda_{1r}e_r$ is an array of real numbers $\{\lambda_{i,j},\eta_{i,j}\}$ with $1\leq i\leq j\leq r$, satisfying the inequalities 
\begin{equation}\label{eq:BZ-ineq}
\min(\lambda_{i,j-1},\lambda_{i+1,j-1})\geq\eta_{i,j-1}\geq\max(\lambda_{i,j},\lambda_{i+1,j})
\end{equation}
whenever the indices are defined, and $\eta_{i,r}\geq0$ for $1\leq i\leq r$. Denote $|\lambda_i|:=\sum_{j}\lambda_{i,j}$ and $|\eta_i|:=\sum_{j}\eta_{i,j}$. We say that such a BZ-pattern has \emph{weight} $\mu=\mu_1e_1+\cdots+\mu_re_r$ where $\mu_i=|\lambda_i|+|\lambda_{i+1}|-2|\eta_i|$ with the convention that $|\lambda_{r+1}|=0$.

A real BZ-pattern in type $D_r$ of highest weight $\lambda=\lambda_{11}e_1+\cdots+\lambda_{1r}e_r$ is an array of real numbers $\{\lambda_{i,j}\}_{1\leq i\leq j\leq r}\cup\{\eta_{i,j}\}_{1\leq i\leq j<r}$, satisfying \eqref{eq:BZ-ineq} and 
\begin{equation}\label{eq:type-D}
\lambda_{i,r}+\lambda_{i+1,r}+\min(\lambda_{i,r-1},\lambda_{i+1,r-1})\geq\eta_{i,r-1}
\end{equation}
for all $i$ with the convention that $\lambda_{i+1,i}=+\infty$. The weight of a real type $D_r$ BZ-pattern is defined in the same way as in type $B_r/C_r$.
\end{defin}

It is visually more convenient to view BZ-patterns in the following way:
\[\begin{pmatrix}
\lambda_{11} & & \lambda_{12} & \cdots & \cdots & \lambda_{1r} & \\
& \eta_{11} & & \eta_{12} & \cdots & \cdots & \eta_{1r} \\
& & \lambda_{22} & \cdots & \cdots & \lambda_{2r} & \vdots \\
& & & \ddots & & \vdots & \vdots \\
& & & & & \lambda_{rr} & \vdots \\
& & & & & & \eta_{rr}
\end{pmatrix}\]
where the inequalities in \eqref{eq:BZ-ineq} dictate that adjacent entries (in the $45^{\circ}$ manner) increase from left to right.
\begin{defin}\label{def:BZ-pattern-polytope}
For a dominant weight $\lambda=\lambda_{11}e_1+\cdots+\lambda_{1r}e_r$, define $\BZ_{\lambda}$, the \emph{BZ-polytope} of highest weight $\lambda$, to be the polytope cut off by the inequalities specified in Definition~\ref{def:BZ-pattern}. For another dominant weight $\mu$, let the BZ-polytope of highest weight $\lambda$ and weight $\mu$, $\BZ_{\lambda,\mu}$, be a section of $\BZ_{\lambda}$ by imposing the equality $\mu_i=|\lambda_i|+|\lambda_{i+1}|-2|\eta_i|$ for $i=1,\ldots,r$, as in Definition~\ref{def:BZ-pattern}.
\end{defin}

The notion of $\BZ$-patterns allows us to work with classical types almost uniformly. Note that a $\BZ$-pattern of type $D$ has different coordinates and additional inequalities comparing to type $B$ and $C$.
\begin{defin}\label{def:BZ-pattern-integral}
A real BZ-pattern $\{\lambda_{i,j},\eta_{i,j}\}$ is \emph{integral} of 
\begin{itemize}
\item type $C_r$, if $\lambda_{i,j},\eta_{i,j}\in\mathbb{Z}$;
\item type $B_r$, if all of $\lambda_{i,j}$, $1\leq i\leq j\leq r$ and $\eta_{i,j}$, $1\leq i\leq j<r$ lie simultaneously in either $\mathbb{Z}$ or $\frac{1}{2}+\mathbb{Z}$, and $\eta_{i,r}\in\frac{1}{2}\mathbb{Z}$;
\item type $D_r$, if all of $\lambda_{i,j}$ and $\eta_{i,j}$ lie simultaneously in either $\mathbb{Z}$ or $\frac{1}{2}+\mathbb{Z}$.
\end{itemize}
\end{defin}

\begin{theorem}[\cite{BZ88}]\label{thm:BZ-pattern-integral}
In types $B_r,C_r,D_r$, $K_{\lambda,\mu}$ equals the number of integral BZ-patterns of highest weight $\lambda$ and weight $\mu$. 
\end{theorem}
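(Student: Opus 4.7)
The plan is to prove \Cref{thm:BZ-pattern-integral} by induction on the rank $r$, interpreting a BZ-pattern as a record of iterated branching. A rank-$r$ BZ-pattern of highest weight $\lambda$ naturally decomposes into its top two rows $(\lambda_{1,1},\ldots,\lambda_{1,r})$ and $(\eta_{1,1},\ldots,\eta_{1,r})$ together with a sub-pattern $(\lambda_{i,j},\eta_{i,j})_{i\geq 2}$ that is itself a rank-$(r-1)$ BZ-pattern of the same classical type. The inductive hypothesis would enumerate completions of any fixed choice of top two rows, reducing the problem to verifying that those top rows correctly encode a single rank-reducing branching step and contribute exactly $\mu_1$ to the total weight.

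To make the inductive step work, I would identify the $\eta$-row with an intermediate weight in a two-step branching chain $\mathfrak{g}_r \supset \mathfrak{g}' \supset \mathfrak{g}_{r-1}$, where the intermediate algebra is either a genuine classical Lie algebra (for types $B$ and $D$, where $\mathfrak{so}_{2r+1}\supset\mathfrak{so}_{2r}$ and $\mathfrak{so}_{2r}\supset\mathfrak{so}_{2r-1}$ are honest subalgebras) or a virtual intermediate step (for type $C$, in the spirit of Proctor's nonmaximal-symmetry analysis). The interlacing inequalities \eqref{eq:BZ-ineq} should match the classical branching constraints of Zhelobenko/Proctor, the nonnegativity $\eta_{i,r}\geq 0$ in types $B$ and $C$ reflects dominance at the intermediate stage, and the asymmetric condition \eqref{eq:type-D} in type $D$ captures the signed branching at the fork $\alpha_r$. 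The weight contribution $\mu_i=|\lambda_i|+|\lambda_{i+1}|-2|\eta_i|$ then records the $e_i$-component of the weight consumed in each rank-reducing step, so iterating $r$ times recovers the full weight $\mu$, and the total count of completions becomes $\dim V_{\lambda}(\mu)=K_{\lambda,\mu}$ by Frobenius reciprocity applied inductively.

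The main obstacle is verifying type $D$: the asymmetric inequality \eqref{eq:type-D} with the convention $\lambda_{i+1,i}=+\infty$ must correspond exactly to the $\mathfrak{so}_{2r}\supset\mathfrak{so}_{2r-1}$ branching step, where the last coordinate may be negative and the distinction between the two spin fundamental weights $\omega_{r-1}$ and $\omega_r$ plays a subtle role. A secondary difficulty is matching the integrality condition of \Cref{def:BZ-pattern-integral}: in types $B_r$ and $D_r$, the branching must preserve the \emph{all in $\mathbb{Z}$ or all in $\frac{1}{2}+\mathbb{Z}$} alternative uniformly across all intermediate weights, which is a global parity constraint rather than a local one. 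An alternative route is to apply \Cref{lemma:Kostkatomult} to recast $K_{\lambda,\mu}$ as a tensor multiplicity $c^{\mu'}_{\lambda,\nu}$ with $\nu=\sum c_i\omega_i$, and then iteratively apply classical Pieri rules (one per fundamental summand in $\nu$) to fill in the BZ-pattern entry by entry; but this still ultimately hinges on identifying each Pieri step with the interlacing inequalities of \Cref{def:BZ-pattern}, so the combinatorial crux is the same.
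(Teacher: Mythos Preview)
The paper does not prove \Cref{thm:BZ-pattern-integral}; it is stated with attribution to \cite{BZ88} and used as a black box. There is therefore no ``paper's own proof'' to compare your proposal against.

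That said, your sketch is broadly in the spirit of the original Berenstein--Zelevinsky argument: they, too, build the pattern by iterated restriction along the chain of classical subalgebras, and the interlacing inequalities do encode the branching multiplicities at each step. Your identification of the main technical difficulties is accurate---the type~$D$ inequality \eqref{eq:type-D} and the global half-integer parity constraint are precisely where the work lies. What your outline does not yet supply is a proof that the branching at each step is \emph{multiplicity-free} and given \emph{exactly} by the stated interlacing (and, in type~$C$, a rigorous treatment of the ``virtual'' intermediate algebra, since $\mathfrak{sp}_{2r}\supset\mathfrak{sp}_{2r-2}$ is not multiplicity-free without the auxiliary $\eta$-row). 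These are nontrivial representation-theoretic inputs, not combinatorial bookkeeping, and they are the substance of \cite{BZ88}; your proposal correctly locates them but does not dispatch them. The alternative route via \Cref{lemma:Kostkatomult} and Pieri-type rules would require an independent proof of those Pieri rules in types $B$, $C$, $D$, which is of comparable difficulty.
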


\section{Proof of the main theorem, Theorem~\ref{thm:main}}\label{sec:proof}
We first provide the general framework to out proof, before going into each subsection for the corresponding step:
\begin{enumerate}
\item Reduce to the case where the pair $(\lambda,\mu)$ is primitive.
\item Compute the dimension of the polytope $\BZ_{\lambda}$.
\item Identify a point $P\in\BZ_{\lambda}^{\circ}\cap \BZ_{\lambda,\mu}$ to compute $\dim\BZ_{\lambda,\mu}$. 
\item Finish by establishing the equality $\deg K_{\lambda,\mu}=\dim \BZ_{\lambda,\mu}$.
\end{enumerate}
Throughout, consider a root system $\Phi$ of rank $r$ of classical type.

\subsection{Reduction to simple primitive pairs}
For any $\Pi'\subset \Pi$, let $\Phi'\subset \Phi$ be the root subsystem generated by $\Pi'$. Let $\mathfrak{g}'\subset \mathfrak{g}$ be the subalgebra defined by
\begin{equation}\label{eqn:subalg}
    \mathfrak{g}' = \mathfrak{h}'\oplus \bigoplus_{\alpha\in \Phi'} \mathfrak{g}_{\alpha},
\end{equation}
where $\mathfrak{h}'$ is spanned by $\{h_\alpha\in \mathfrak{h}:\alpha \in \Phi'\}$. Let $p:\mathfrak{h}^*\rightarrow (\mathfrak{h}')^*$ be the natural projection.
\begin{lemma}\label{lemma:weightproj}
    $p(\omega_i) = 0$ if $\alpha_i\notin \Pi'$ and $p(\omega_i)$ is a fundamental weight of $\mathfrak{g}'$ if $\alpha_i\in \Pi'$. In fact, all fundamental weights of $\mathfrak{g}'$ can be written as $p(\omega_i)$ for some $i$.
\end{lemma}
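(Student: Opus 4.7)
The plan is to interpret the natural projection $p: \mathfrak{h}^{*} \to (\mathfrak{h}')^{*}$ as the restriction of linear functionals from $\mathfrak{h}$ to the Cartan subalgebra $\mathfrak{h}'$ of $\mathfrak{g}'$, and then to evaluate $p(\omega_i)$ on a convenient basis of $\mathfrak{h}'$. Since $\mathfrak{g}'$ is semisimple with Cartan $\mathfrak{h}'$ and root system $\Phi'$ generated by $\Pi'$, the simple coroots $\{\alpha_j^{\vee} = h_{\alpha_j} : \alpha_j \in \Pi'\}$ form a basis of $\mathfrak{h}'$, and $\Pi'$ is exactly the simple system of $\Phi'$ relative to the positive system $\Phi'_{+} := \Phi' \cap \Phi_{+}$.

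With this identification, the computation is immediate from the defining duality $\langle \alpha_j^{\vee}, \omega_i\rangle = \delta_{i,j}$: for every $\alpha_j \in \Pi'$ one has $p(\omega_i)(\alpha_j^{\vee}) = \omega_i(\alpha_j^{\vee}) = \delta_{i,j}$. If $\alpha_i \notin \Pi'$, this pairing is zero for all $j$ indexing the basis of $\mathfrak{h}'$, so $p(\omega_i) \equiv 0$. If instead $\alpha_i \in \Pi'$, then $p(\omega_i)$ is the unique linear functional on $\mathfrak{h}'$ pairing to $\delta_{i,j}$ against the simple coroots of $\mathfrak{g}'$, which is precisely the defining property of the fundamental weight of $\mathfrak{g}'$ corresponding to $\alpha_i$.

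For the final assertion, the fundamental weights of $\mathfrak{g}'$ are in bijection with its simple roots, i.e.\ with $\Pi'$, and the previous paragraph exhibits each of them explicitly as $p(\omega_i)$ for the appropriate $\alpha_i \in \Pi'$. The only point that might warrant a brief justification is the standard root-system fact that $\Pi'$ is indeed a simple system of $\Phi'$: any $\beta \in \Phi'_{+}$ is a nonnegative integer combination of $\Pi$, but membership in $\Phi'$ forces only coefficients on $\Pi'$ to be nonzero, so $\Pi'$ is exactly the set of indecomposable elements of $\Phi'_{+}$. Beyond this, there is no genuine obstacle — the whole lemma is a direct dualization of the inclusion $\mathfrak{h}' \hookrightarrow \mathfrak{h}$.
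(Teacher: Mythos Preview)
Your argument is correct and follows essentially the same route as the paper's: both compute the pairing of $p(\omega_i)$ against the simple (co)roots indexed by $\Pi'$, obtain $\delta_{i,j}$, and read off the two cases. The only cosmetic difference is that you phrase $p$ as restriction of functionals and pair against the coroots $\alpha_j^{\vee}$, whereas the paper treats $(\mathfrak{h}')^{*}$ as the subspace of $\mathfrak{h}^{*}$ spanned by $\Pi'$, views $p$ as the orthogonal projection, and pairs against the roots $\alpha_j$; these are equivalent under the inner-product identification already set up in the paper.
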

\begin{proof}
    For $\alpha_i\notin \Pi'$, $\langle \omega_i,\alpha\rangle = 0$ for all $\alpha\in \Pi'$. Since $\{\alpha\in \Pi'\}$ span $(\mathfrak{h}')^*$, $\omega_i$ is orthogonal to $(\mathfrak{h}')^*$ and $p(\omega_i) = 0$. Now for $\alpha_i\in \Pi'$, we have $\alpha_i\in (\mathfrak{h}')^*$. Therefore $\langle \omega_i,\alpha_i \rangle = \langle p(\omega_i),\alpha_i \rangle = 1$. For any $\alpha_j\in \Pi'$ such that $i\neq j$, since $\langle \omega_i,\alpha_j \rangle = 0$ and $\alpha_j\in (\mathfrak{h}')^*$, we get $\langle p(\omega_i),\alpha_j \rangle = 0$. Therefore $\langle p(\omega_i),\alpha_j \rangle = \delta_{i,j}$ and $p(\omega_i)$ is a fundamental weight of $\mathfrak{g}'$. Since the number of fundamental weights of $\mathfrak{g}'$ equals number of simple roots, $\{p(\omega_i):\alpha_i\in \Pi'\}$ are the fundamental weights of $\mathfrak{g}'$.
\end{proof}

\begin{lemma}[\cite{BZ88}, Proposition~1.3]\label{lemma:multg0}
    Let $\lambda,\nu,\mu$ be dominant integral weights of $\mathfrak{g}$ such that $\lambda+\nu-\mu\in \Phi'$. Denote $c_{p(\lambda),p(\nu)}^{p(\mu)}(\mathfrak{g}')$ the multiplicity of $V_{p(\mu)}\subset V_{p(\lambda)}\otimes V_{p(\nu)}$ as $\mathfrak{g}'$-module. Then
    \[c^{\mu}_{\lambda,\nu} = c^{p(\mu)}_{p(\lambda),p(\nu)}(\mathfrak{g}').\]
\end{lemma}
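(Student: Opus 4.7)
The plan is to use formula~\eqref{eqn:Oct13} to translate both multiplicities into dimensions of certain subspaces:
\[c^{\mu}_{\lambda,\nu}=\dim V_\lambda(\mu-\nu,\nu),\qquad c^{p(\mu)}_{p(\lambda),p(\nu)}(\mathfrak{g}')=\dim V_{p(\lambda)}(p(\mu-\nu),p(\nu)).\]
I would then identify the right-hand space inside the left-hand one by realizing $V_{p(\lambda)}$ as the $\mathfrak{g}'$-submodule $W:=U(\mathfrak{g}')\cdot v_\lambda\subset V_\lambda$ generated by the $\mathfrak{g}$-highest weight vector: since $v_\lambda$ is killed by every $e_{\alpha_i}$, it is in particular a $\mathfrak{g}'$-highest weight vector of weight $p(\lambda)$ (which is $\mathfrak{g}'$-dominant by Lemma~\ref{lemma:weightproj}), so $W\cong V_{p(\lambda)}$ as $\mathfrak{g}'$-modules by standard highest weight theory. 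The $\mathfrak{g}$-weights appearing in $W$ are exactly $\{\lambda-\sum_{\alpha_i\in\Pi'}k_i\alpha_i:k_i\in\mathbb{Z}_{\geq 0}\}$, and the hypothesis $\lambda+\nu-\mu\in\Phi'$ (interpreted as the root lattice spanned by $\Pi'$) places $\mu-\nu$ in this set.

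The main content of the proof is the containment $V_\lambda(\mu-\nu,\nu)\subseteq W$. I would decompose $V_\lambda$ into $\mathfrak{g}'$-isotypic components, refined by the scalar action on each summand of a chosen complement to $\mathfrak{h}'$ inside $\mathfrak{h}$ centralizing $\mathfrak{g}'$, so that each piece sits in a single coset of $\mathfrak{g}$-weights modulo the root lattice of $\Pi'$. The key observation is that any $\mathfrak{g}'$-highest weight vector $u\in V_\lambda$ of $\mathfrak{g}$-weight $\sigma=\lambda-\sum_{\alpha_i\in\Pi'}m_i\alpha_i$ with $m_i\in\mathbb{Z}_{\geq 0}$ is automatically a $\mathfrak{g}$-highest weight vector: for $\alpha_j\notin\Pi'$, the image $e_{\alpha_j}u$ would have weight $\sigma+\alpha_j$, but $\lambda-(\sigma+\alpha_j)$ has a negative coefficient on $\alpha_j$, so this is not a weight of $V_\lambda$ and $e_{\alpha_j}u=0$. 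Irreducibility of $V_\lambda$ then forces $u$ to be a scalar multiple of $v_\lambda$, i.e.\ $\sigma=\lambda$; hence the only $\mathfrak{g}'$-isotypic component contributing weights in $\lambda+\mathbb{Z}\Pi'$ is $W$ itself.

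The same weight-counting observation shows that the annihilation conditions $e_{\alpha_j}^{n_{\alpha_j}+1}v=0$ with $\alpha_j\notin\Pi'$ in the definition of $V_\lambda(\mu-\nu,\nu)$ are automatic for any $v\in V_\lambda(\mu-\nu)$, since the target weight $\mu-\nu+(n_{\alpha_j}+1)\alpha_j$ again falls outside $V_\lambda$. Therefore, once $V_\lambda(\mu-\nu,\nu)\subseteq W$, the defining conditions for $V_\lambda(\mu-\nu,\nu)$ match those for $V_{p(\lambda)}(p(\mu-\nu),p(\nu))$ under the isomorphism $W\cong V_{p(\lambda)}$ (noting that on $W$ the $\mathfrak{g}'$-weight of a vector of $\mathfrak{g}$-weight $\mu-\nu$ is exactly $p(\mu-\nu)$, because $p$ is linear and fixes each $\alpha_i\in\Pi'$), and taking dimensions yields the lemma.

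I expect the main obstacle to be setting up cleanly the refined decomposition of $V_\lambda$ in the second paragraph, keeping track simultaneously of the $\mathfrak{g}'$-module structure and the $\mathfrak{g}$-weight grading so that the phrase ``$\mathfrak{g}'$-isotypic component with a given coset of $\mathfrak{g}$-weights'' is unambiguous; once this is in place, the irreducibility argument ruling out components other than $W$ is very short, and the role of the hypothesis $\lambda+\nu-\mu\in\Phi'$ is simply to ensure that $\mu-\nu$ lies in the correct coset.
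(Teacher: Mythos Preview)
The paper does not prove this lemma at all: it is stated with a citation to \cite{BZ88}, Proposition~1.3, and no argument is given. So there is no ``paper's proof'' to compare against; your proposal supplies what the paper outsources.

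Your argument is essentially the standard one and is correct. A small simplification for the point you flag as the main obstacle: you do not actually need to introduce a complement $\mathfrak{h}''\subset\mathfrak{h}$ centralizing $\mathfrak{g}'$ in order to ensure that $\mathfrak{g}'$-highest weight vectors are $\mathfrak{g}$-weight vectors. The space
\[
H:=\{v\in V_\lambda^{[\lambda]}: e_{\alpha_i}v=0\text{ for all }\alpha_i\in\Pi'\}
\]
of $\mathfrak{g}'$-singular vectors in the coset component $V_\lambda^{[\lambda]}$ is automatically $\mathfrak{h}$-stable (because $e_{\alpha_i}(hv)=h(e_{\alpha_i}v)-\alpha_i(h)e_{\alpha_i}v=0$), hence decomposes as $\bigoplus_\beta (H\cap V_\lambda(\beta))$. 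Your key observation then applies verbatim to each nonzero $u\in H\cap V_\lambda(\beta)$ and forces $\beta=\lambda$, $u\in\mathbb{C}v_\lambda$; thus $V_\lambda^{[\lambda]}$ is $\mathfrak{g}'$-irreducible and equals $W$. This sidesteps the bookkeeping you were worried about.

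One further remark: in your step showing that the conditions $e_{\alpha_j}^{n_{\alpha_j}+1}v=0$ for $\alpha_j\notin\Pi'$ are automatic, you can in fact note that already $e_{\alpha_j}v=0$ for any $v\in V_\lambda(\mu-\nu)$, since $\lambda-(\mu-\nu+\alpha_j)$ has coefficient $-1$ on $\alpha_j$; this avoids any dependence on how the paper parametrizes the exponents $n_{\alpha_j}$.
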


\begin{lemma}\label{lemma:Kostkag0}
    For $\lambda,\mu\in \Lambda^+$ such that $\lambda$ dominates $\mu$ and $\lambda-\mu \in \Phi'$, 
    \[K_{\lambda,\mu} = K_{p(\lambda),p(\mu)}(\mathfrak{g}'),\]
    where $K_{p(\lambda),p(\mu)}(\mathfrak{g}')$ is the weight multiplicity of $p(\mu)$ in the $\mathfrak{g}'$-module $V_{p(\lambda)}$.
\end{lemma}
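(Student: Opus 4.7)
The plan is to realize both sides of the claimed identity as tensor product multiplicities via Lemma~\ref{lemma:Kostkatomult}, and then transfer the multiplicity from $\mathfrak{g}$ to $\mathfrak{g}'$ using Lemma~\ref{lemma:multg0}. The starting observation is that the hypothesis $\lambda-\mu\in\Phi'$, interpreted as lying in the $\mathbb{Z}$-span of $\Pi'$, combined with the linear independence of $\Pi$, forces $c_i=0$ whenever $\alpha_i\notin\Pi'$ in the expansion $\lambda-\mu=\sum c_i\alpha_i$. This is what makes the auxiliary weight $\nu$ from Lemma~\ref{lemma:Kostkatomult} behave well under the projection $p$.

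First I would apply Lemma~\ref{lemma:Kostkatomult} on the $\mathfrak{g}$ side, setting $\nu=\sum_{i=1}^r c_i\omega_i$ and $\mu'=\mu+\nu$, to obtain $K_{\lambda,\mu}=c^{\mu'}_{\lambda,\nu}$. Since $\lambda+\nu-\mu'=\lambda-\mu\in\Phi'$, the hypothesis of Lemma~\ref{lemma:multg0} is met and
\[c^{\mu'}_{\lambda,\nu}=c^{p(\mu')}_{p(\lambda),p(\nu)}(\mathfrak{g}').\]

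Next I would apply Lemma~\ref{lemma:Kostkatomult} inside $\mathfrak{g}'$ to the pair $(p(\lambda),p(\mu))$. Because $p$ fixes the simple roots in $\Pi'$ and sends the others to $0$, the difference $p(\lambda)-p(\mu)$ expands as $\sum_{\alpha_i\in\Pi'}c_i\alpha_i$. By Lemma~\ref{lemma:weightproj}, the fundamental weights of $\mathfrak{g}'$ are exactly $\{p(\omega_i):\alpha_i\in\Pi'\}$, so the ``$\nu$'' and ``$\mu'$'' prescribed by Lemma~\ref{lemma:Kostkatomult} on the $\mathfrak{g}'$ side are $\sum_{\alpha_i\in\Pi'}c_i\,p(\omega_i)=p(\nu)$ and $p(\mu)+p(\nu)=p(\mu')$ respectively, using crucially that $c_i=0$ for $\alpha_i\notin\Pi'$. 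Hence $K_{p(\lambda),p(\mu)}(\mathfrak{g}')=c^{p(\mu')}_{p(\lambda),p(\nu)}(\mathfrak{g}')$, and chaining this with the previous two equalities will finish the proof.

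The only real content beyond bookkeeping is verifying that the Kostka-to-tensor-multiplicity construction of Lemma~\ref{lemma:Kostkatomult} commutes with the projection $p$. The potential pitfall is to mishandle indices $i$ with $\alpha_i\notin\Pi'$, which is precisely why the hypothesis $\lambda-\mu\in\Phi'$ (giving $c_i=0$ there) is essential; beyond this, I do not anticipate any obstacle.
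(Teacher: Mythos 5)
Your proposal matches the paper's proof essentially step for step: express both Kostka numbers as tensor multiplicities via Lemma~\ref{lemma:Kostkatomult}, transfer from $\mathfrak{g}$ to $\mathfrak{g}'$ via Lemma~\ref{lemma:multg0}, and use Lemma~\ref{lemma:weightproj} together with the vanishing of $c_i$ for $\alpha_i\notin\Pi'$ to identify the auxiliary weights on the $\mathfrak{g}'$ side with the projections of those on the $\mathfrak{g}$ side. The argument is correct and complete.
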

\begin{proof}
    Since $\lambda$ dominates $\mu$ and $\lambda-\mu\in \Phi'$, we can write 
    \[\lambda-\mu = \sum_{\{i:\alpha_i\in \Pi'\}}c_i\alpha_i.\]
    By Lemma~\ref{lemma:Kostkatomult}, $K_{\lambda,\mu} = c^{\mu'}_{\lambda,\nu}$, where $\nu = \sum_{\{i:\alpha_i\in \Pi'\}}c_i\omega_i$ and $\mu' = \mu+\nu$. Since $\lambda+\nu-\mu' = \lambda-\mu\in \Phi'$, by Lemma~\ref{lemma:multg0}, $ c^{\mu'}_{\lambda,\nu} = c^{p(\mu')}_{p(\lambda),p(\nu)}(\mathfrak{g}')$. Since $\lambda-\mu\in (\mathfrak{h}')^*$, 
    $p(\lambda)-p(\mu) = p(\lambda-\mu) = \lambda-\mu = \sum_{\{i:\alpha_i\in \Pi'\}}c_i\alpha_i$. By Lemma~\ref{lemma:weightproj}, $p(\nu) = \sum_{\{i:\alpha_i\in \Pi'\}}c_i p(\omega_i)$ and each $p(\omega_i)$ is a fundamental weight of $\mathfrak{g}'$ that corresponds to $\alpha_i\in (\mathfrak{h}')^*$. Since $p(\mu') = p(\mu+\nu) = p(\mu)+p(\nu)$, we can apply Lemma~\ref{lemma:Kostkatomult} again and get $K_{p(\lambda),p(\mu)}(\mathfrak{g}') = c^{p(\mu')}_{p(\lambda),p(\nu)}(\mathfrak{g}')$. Therefore 
    \[K_{\lambda,\mu} =c^{\mu'}_{\lambda,\nu} = c^{p(\mu')}_{p(\lambda),p(\nu)}(\mathfrak{g}') =  K_{p(\lambda),p(\mu)}(\mathfrak{g}').\qedhere\]
\end{proof}

We now return to the context of non-primitive pairs. For a pair $(\lambda,\mu)$ that is not primitive, let $\Pi_0 = \{\alpha_i:c_i\neq 0\}$ be the subset of $\Pi$ consisting of simple roots that appears in \eqref{eqn:lambda-mu}. Let $\Phi_0\subset \Phi, W_0\subset W$ be the corresponding root subsystem and Weyl subgroup and $\mathfrak{g}_0\subset \mathfrak{g}$ the Lie subalgebra as in \eqref{eqn:subalg}. Write $r_0 = rk(\Phi_0)$ and $\{\omega_{\alpha}:\alpha\in \Pi_0\}$ the fundamental weights of $\mathfrak{g}_0$. Since $\mathfrak{g}_0$ is also semisimple, we can write $\mathfrak{g}_0 = \bigoplus_{k = 1}^m \mathfrak{g}_k$ where each $\mathfrak{g}_k$ is a simple Lie algebra. Let $\mathfrak{h}_k, \Pi_k, \Phi_k, W_k$ be the Cartan subalgebra, simple roots, root system and Weyl group of $\mathfrak{g}_k$. In particular, $\Pi_0 = \bigsqcup_{k = 1}^m \Pi_k$ and $W_0 = W_1\times \ldots \times W_m$. Let $\rho_k = \frac{1}{2}\sum_{\alpha\in \Phi^+_k}\alpha$ be the half sum of positive roots in $\Phi_k$.
Define $p_k:\mathfrak{h}^*\rightarrow (\mathfrak{h_k})^*$ to be the natural projection.
\begin{prop}\label{prop:primred}
    There exists pairs $(\lambda^{(k)},\mu^{(k)})_{k \in [m]}$ such that each $(\lambda^{(k)},\mu^{(k)})$ is a primitive pair of $\mathfrak{g}_k$ and 
    \[K_{\lambda,\mu} = \prod_{k = 1}^{m}K_{\lambda^{(k)},\mu^{(k)}}(\mathfrak{g}_k).\]
\end{prop}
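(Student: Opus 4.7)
The plan is to proceed in two reduction steps: first from $\mathfrak{g}$ down to the (possibly non-simple) semisimple subalgebra $\mathfrak{g}_0$, and then from $\mathfrak{g}_0$ down to its simple direct summands $\mathfrak{g}_k$.

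For the first step, since $\lambda-\mu=\sum_{\alpha_i\in\Pi_0}c_i\alpha_i$ lies in $\Phi_0$ (the span of $\Pi_0$), I would apply Lemma~\ref{lemma:Kostkag0} directly with $\Pi'=\Pi_0$, $\Phi'=\Phi_0$ and $\mathfrak{g}'=\mathfrak{g}_0$. This gives $K_{\lambda,\mu}=K_{p_0(\lambda),p_0(\mu)}(\mathfrak{g}_0)$, where $p_0\colon\mathfrak{h}^{*}\to(\mathfrak{h}_0)^{*}$ is the projection. Observe that by Lemma~\ref{lemma:weightproj}, $p_0(\lambda)$ is a dominant integral weight of $\mathfrak{g}_0$ (its coefficients in the fundamental weights of $\mathfrak{g}_0$ are exactly the $d_i$ for $\alpha_i\in\Pi_0$), and the same is true for $p_0(\mu)$.

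For the second step, I would use that $\mathfrak{g}_0=\bigoplus_{k=1}^m\mathfrak{g}_k$ as a direct sum of simple ideals whose simple roots partition as $\Pi_0=\bigsqcup_k\Pi_k$. The standard fact I want to invoke is that an irreducible $\mathfrak{g}_0$-module factors as an external tensor product of irreducibles of the $\mathfrak{g}_k$: writing $p_k\colon\mathfrak{h}^{*}\to(\mathfrak{h}_k)^{*}$ for the further projection and setting $\lambda^{(k)}:=p_k(\lambda)$, $\mu^{(k)}:=p_k(\mu)$, one has $V_{p_0(\lambda)}\cong\bigotimes_{k=1}^m V_{\lambda^{(k)}}$ as $\mathfrak{g}_0$-modules, and the weight $p_0(\mu)$-space decomposes as the tensor product of the individual $\mu^{(k)}$-weight spaces (since the projections $p_k$ sum to $p_0$ on $\mathfrak{h}_0^{*}$ and each $\mathfrak{h}_k$ acts only on the $k$-th factor). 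Taking dimensions,
\[
K_{p_0(\lambda),p_0(\mu)}(\mathfrak{g}_0)\;=\;\prod_{k=1}^m K_{\lambda^{(k)},\mu^{(k)}}(\mathfrak{g}_k).
\]

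Finally, I would verify that each pair $(\lambda^{(k)},\mu^{(k)})$ is primitive for $\mathfrak{g}_k$. From $\lambda-\mu=\sum_{\alpha_i\in\Pi_0}c_i\alpha_i$ with all displayed $c_i>0$, applying $p_k$ (which acts as the identity on $\Pi_k$ and kills the simple roots in $\Pi_j$ for $j\neq k$, by Lemma~\ref{lemma:weightproj}) yields $\lambda^{(k)}-\mu^{(k)}=\sum_{\alpha_i\in\Pi_k}c_i\alpha_i$ with every coefficient strictly positive, which is exactly the definition of a simple primitive pair. Combining this with the two displayed equalities gives the desired formula.

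I expect the only real subtlety to be the tensor-product decomposition of $V_{p_0(\lambda)}$ in the second step, which is classical but deserves a short justification via uniqueness of the highest-weight vector and the fact that the $\mathfrak{g}_k$ act on disjoint sets of simple roots. Everything else is a matter of chasing the projections $p_0$ and $p_k$ and invoking Lemmas~\ref{lemma:weightproj} and~\ref{lemma:Kostkag0}.
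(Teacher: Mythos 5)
Your proposal is correct, and the first reduction (from $\mathfrak{g}$ to $\mathfrak{g}_0$ via Lemma~\ref{lemma:Kostkag0} applied with $\Pi'=\Pi_0$) matches the paper exactly. Where you differ is the second step: to pass from $\mathfrak{g}_0$ to its simple ideals $\mathfrak{g}_k$, you invoke the classical structure theorem that an irreducible $\mathfrak{g}_0$-module is an external tensor product $V_{p_0(\lambda)}\cong\bigotimes_k V_{\lambda^{(k)}}$, and that the weight spaces factor accordingly because each $\mathfrak{h}_k$ acts on its own tensor factor. The paper instead derives the same multiplicativity computationally from the Kostant multiplicity formula (Theorem~\ref{thm:Kostantmult}): it writes $\lambda^{(0)}=\sum_k\tilde\lambda^{(k)}$, uses that $W_0=W_1\times\cdots\times W_m$ and that the Kostant partition function of $\mathfrak{g}_0$ factors over the orthogonal subspaces $(\mathfrak{h}_k)^*$, and then swaps the outer product and sum to get $K_{\lambda^{(0)},\mu^{(0)}}(\mathfrak{g}_0)=\prod_k K_{\tilde\lambda^{(k)},\tilde\mu^{(k)}}(\mathfrak{g}_0)$, before applying Lemma~\ref{lemma:Kostkag0} once more per factor. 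Your argument is shorter and more conceptual, relying on a standard module-theoretic fact rather than on the partition-function machinery; the paper's route keeps everything within the toolkit (Kostant's formula plus Lemma~\ref{lemma:Kostkag0}) it has already set up, at the cost of a more explicit calculation. Note also that your $\lambda^{(k)}:=p_k(\lambda)$ agrees with the paper's $\lambda^{(k)}:=p_k(\tilde\lambda^{(k)})$, since $p_k$ kills $\tilde\lambda^{(j)}$ for $j\neq k$, so the two constructions produce the same primitive pairs. The only thing worth spelling out, as you flag, is the weight-space factorization $V_{p_0(\lambda)}(p_0(\mu))=\bigotimes_k V_{\lambda^{(k)}}(\mu^{(k)})$, which follows because $p_0(\mu)$ has a unique decomposition into components in the orthogonal direct sum $\bigoplus_k(\mathfrak{h}_k)^*$.
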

\begin{proof}
    Let $\lambda^{(i)} = p_i(\lambda)$ and $\mu^{(i)} = p_i(\mu)$ for all $i\in [0,m]$. By Lemma~\ref{lemma:Kostkag0}, 
    \begin{equation}\label{eqn:Aug24bbb}
        K_{\lambda,\mu} = K_{\lambda^{(0)},\mu^{(0)}}(\mathfrak{g}_0).
    \end{equation}
    For any $k\in [m]$, set 
    \[\tilde\lambda^{(k)} = \sum_{\alpha\in \Pi_k}\langle \lambda^{(0)},\alpha \rangle \omega_{\alpha}\text{ and }\tilde\mu^{(k)} = \sum_{\alpha\in \Pi_k}\langle \mu^{(0)},\alpha \rangle \omega_{\alpha}.\] 
    Notice that $\tilde\lambda^{(k)},\tilde\mu^{(k)}\in (\mathfrak{h}_k)^*\subset \mathfrak{h}^*$ and $\lambda^{(0)} = \sum_{k=1}^{m}\tilde\lambda^{(k)}$ and $\mu^{(0)} = \sum_{k = 1}^m \tilde\mu^{(k)}$. By \eqref{thm:Kostantmult}, 
    \begin{equation*}
            K_{\lambda^{0},\mu^{(0)}}(\mathfrak{g}_0) = \sum_{w = (w_1,\ldots,w_m)\in W_0}(\prod_{k = 1}^{m}\det(w_k))P(w(\sum_{k=1}^{m}(\tilde\lambda^{(k)}+\rho_k))-\sum_{k = 1}^{m}(\tilde\mu^{(k)}-\rho_k))
    \end{equation*}
    Since $\tilde\lambda^{(k)},\tilde\mu^{(k)}$ are invariant under $W_j$ for all $j\neq k$, for $w = (w_1,\ldots,w_m)\in W$,
    \[P(w(\sum_{k=1}^{m}(\tilde\lambda^{(k)}+\rho_k))-\sum_{k = 1}^{m}(\tilde\mu^{(k)}-\rho_k)) =  P(\sum_{k = 1}^m(w_k(\tilde\lambda^{(k)}+\rho_k) - \tilde\mu^{(k)}-\rho_k)).\]
    Since $w_k(\tilde\lambda^{(k)}+\rho_k) - \tilde\mu^{(k)}-\rho_k)\in (\mathfrak{h}_k)^*$ for all $k\in [m]$, 
    \[P(\sum_{k = 1}^m(w_k(\tilde\lambda^{(k)}+\rho_k) - \tilde\mu^{(k)}-\rho_k)) = \prod_{k = 1}^m P(w_k(\tilde\lambda^{(k)}+\rho_k) - \tilde\mu^{(k)}-\rho_k).\]
    Therefore
    \begin{equation}\label{eqn:Aug24aaa}
        \begin{aligned}
            K_{\lambda^{0},\mu^{(0)}}(\mathfrak{g}_0) &= \sum_{(w_1,\ldots,w_m)\in W}\prod_{k = 1}^m (\det(w_k)P(w_k(\tilde\lambda^{(k)}+\rho_k) - \tilde\mu^{(k)}-\rho_k))\\
            &= \prod_{k = 1}^m \sum_{w_k\in W_k}(\det(w_k)P(w_k(\tilde\lambda^{(k)}+\rho_k) - \tilde\mu^{(k)}-\rho_k))\\
            &= \prod_{k = 1}^{m}K_{\tilde\lambda^{(k)},\tilde\mu^{(k)}}(\mathfrak{g}_0).
        \end{aligned}
    \end{equation}
    Define $\lambda^{(k)} = p_k(\tilde\lambda^{(k)})$ and $\mu^{(k)} = p_k(\tilde\mu^{(k)})$. Since $\lambda^{(k)}-\mu^{(k)} = p_k(\tilde\lambda^{(k)}-\tilde\mu^{(k)}) = p_k(\lambda^{(0)}-\mu^{(0)})$ and every simple root of $\mathfrak{g}_k$ appear in the expansion of $\lambda^{(0)}-\mu^{(0)}$, $(\lambda^{(k)},\mu^{(k)})$ is a primitive pair of $\mathfrak{g}_k$ for all $k\in [m]$. By Lemma~\ref{lemma:Kostkag0}, $K_{\tilde\lambda^{(k)},\tilde\mu^{(k)}}(\mathfrak{g}_0) = K_{\lambda^{(k)},\mu^{(k)}}(\mathfrak{g}_k)$. We are then done by \eqref{eqn:Aug24bbb} and \eqref{eqn:Aug24aaa}.
\end{proof}
An immediate consequence of Proposition~\ref{prop:primred} is the following: 
\begin{cor}\label{cor:reduction}
    $\deg K_{\lambda,\mu}(\mathfrak{g})(N) = \sum_{k = 1}^m \deg K_{\lambda^{(k)},\mu^{(k)}}(\mathfrak{g}_k)(N)$.
\end{cor}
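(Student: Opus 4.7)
The plan is to apply Proposition~\ref{prop:primred} to the scaled pair $(N\lambda, N\mu)$ for each positive integer $N$ and then pass to degrees via a product rule for quasi-polynomials.

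First I would observe that the combinatorial data defining the decomposition $(\lambda,\mu)\leadsto(\lambda^{(k)},\mu^{(k)})_{k=1}^m$ is invariant under scaling: if $\lambda-\mu=\sum_i c_i\alpha_i$, then $N\lambda-N\mu=\sum_i (Nc_i)\alpha_i$, so the subset $\Pi_0=\{\alpha_i:c_i\neq 0\}$, the root subsystems $\Phi_k$, and the simple summands $\mathfrak{g}_k$ are all unchanged when $(\lambda,\mu)$ is replaced by $(N\lambda,N\mu)$. Since each projection $p_k:\mathfrak{h}^{*}\to\mathfrak{h}_k^{*}$ is linear, $p_k(N\lambda)=N\lambda^{(k)}$ and $p_k(N\mu)=N\mu^{(k)}$. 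Applying Proposition~\ref{prop:primred} to $(N\lambda, N\mu)$ therefore gives
\[K_{\lambda,\mu}(\mathfrak{g})(N)=K_{N\lambda,N\mu}=\prod_{k=1}^m K_{N\lambda^{(k)},N\mu^{(k)}}(\mathfrak{g}_k)=\prod_{k=1}^m K_{\lambda^{(k)},\mu^{(k)}}(\mathfrak{g}_k)(N).\]

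To conclude I would argue that the degree of this product equals the sum of the degrees. Because each pair $(\lambda^{(k)},\mu^{(k)})$ is primitive, $\lambda^{(k)}-\mu^{(k)}$ is a nonnegative integer combination of the simple roots of $\mathfrak{g}_k$, and so $K_{\lambda^{(k)},\mu^{(k)}}(\mathfrak{g}_k)(N)\geq 1$ for every $N\geq 0$. By Theorem~\ref{thm:BZ-pattern-integral}, in classical types each such quasi-polynomial enumerates integer (or half-integer) points in the rational polytope $\BZ_{\lambda^{(k)},\mu^{(k)}}$, so it is Ehrhart-theoretic; in particular every polynomial piece, across every residue class of its period, carries the same strictly positive leading coefficient (the normalized volume of $\BZ_{\lambda^{(k)},\mu^{(k)}}$). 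The leading terms thus multiply without cancellation, yielding
\[\deg K_{\lambda,\mu}(\mathfrak{g})(N)=\sum_{k=1}^m \deg K_{\lambda^{(k)},\mu^{(k)}}(\mathfrak{g}_k)(N).\]
The only point requiring some care is the product rule itself, which can fail for generic quasi-polynomials whose polynomial pieces have differing degrees, but is bypassed here by the uniformity of leading coefficients in Ehrhart theory; everything else is a direct specialization of Proposition~\ref{prop:primred}.
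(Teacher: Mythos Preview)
Your argument is correct and is precisely the elaboration the paper has in mind: the paper gives no proof beyond the phrase ``an immediate consequence of Proposition~\ref{prop:primred},'' and your two steps---linearity of the construction under $N$-scaling, then additivity of degrees for the product---are exactly how one unpacks that phrase. You are also right to flag that additivity of degrees for a product of quasi-polynomials is not automatic, and your appeal to the constancy (and positivity) of the Ehrhart leading coefficient across residue classes is the clean way to rule out cancellation; the paper silently assumes this.

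One small remark: your invocation of Theorem~\ref{thm:BZ-pattern-integral} covers $\mathfrak{g}_k$ of types $B$, $C$, $D$, but several of the simple factors $\mathfrak{g}_k$ will be of type $A$ (any connected component of $\Pi_0$ not containing the ``last'' node). There the relevant polytope is the Gelfand--Tsetlin polytope rather than $\BZ_{\lambda^{(k)},\mu^{(k)}}$, and $K_{\lambda^{(k)},\mu^{(k)}}(N)$ is in fact a genuine polynomial; either way the Ehrhart leading-coefficient argument applies, so the conclusion is unaffected.
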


By induction on $k$, it is now enough to prove Theorem~\ref{thm:main} when $(\lambda,\mu)$ is a simple primitive pair.




\subsection{Dimension of the BZ-polytope $\BZ_\lambda$}
Recall that the BZ-polytopes have coordinates indxed by 
\[\begin{cases}
 \{\lambda_{i,j}\}_{1< i\leq j\leq r} \text{ and }\{\eta_{i,j}\}_{1\leq i\leq j\leq r}\text{ for type }B_r\text{ and }C_r, \\ \{\lambda_{i,j}\}_{1< i\leq j\leq r} \text{ and }\{\eta_{i,j}\}_{1\leq i\leq j< r}\text{ for type }D_r.
\end{cases}
\]
The number of coordinates equals $|\Phi_+|$, where $\Phi$ is the root system that we are working with.
Note that since $\lambda=\sum \lambda_{1i}e_i$ is fixed, we choose not to consider $\lambda_{11},\ldots\lambda_{1r}$ as coordinates for $\BZ_{\lambda}$ and $\BZ_{\lambda,\mu}$.

\begin{lemma}\label{lem:dimBZ}
Let $\lambda=\sum_{i=1}^r d_i\omega_i$ be a dominant weight in a root system $\Phi$ where $d_i\in\mathbb{Z}_{\geq}0$ for all $i$. Let $\Phi'\subset\Phi$ be the root subsystem of $\Phi$ spanned by $\{\alpha_i\:|\: d_i=0\}$. Then we have that \[\dim\BZ_{\lambda}=|\Phi_+|-|\Phi'_+|.\]
\end{lemma}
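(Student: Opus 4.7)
The ambient coordinate space for $\BZ_\lambda$ has dimension $|\Phi_+|$, so it suffices to prove matching upper and lower bounds on the codimension of $\BZ_\lambda$. I would attack the two bounds separately.

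For the upper bound $\dim\BZ_\lambda\leq|\Phi_+|-|\Phi'_+|$, I would identify $|\Phi'_+|$ linearly independent equalities holding at every point of $\BZ_\lambda$. Each simple root $\alpha_i$ with $d_i=0$ forces an equality among the fixed top-row entries $\lambda_{1,\cdot}$: namely $\lambda_{1,i}=\lambda_{1,i+1}$ for interior $i$, $\lambda_{1,r}=0$ for $\alpha_r$ in types $B_r,C_r$, and analogous relations $\lambda_{1,r-1}\pm\lambda_{1,r}=0$ at $\alpha_{r-1},\alpha_r$ in type $D_r$. Plugging these into the sandwich inequality of Definition~\ref{def:BZ-pattern} pins a top-$\eta$ entry: for example, $\lambda_{1,i}=\lambda_{1,i+1}$ combined with $\lambda_{1,i}\geq\eta_{1,i}\geq\lambda_{1,i+1}$ forces $\eta_{1,i}=\lambda_{1,i}$. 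When several consecutive $d_j$'s vanish, the pinnings cascade row by row: $d_{i-1}=d_i=0$ additionally forces $\lambda_{2,i}=\lambda_{1,i}$, which together with further vanishing $d$'s propagates to pin $\eta_{2,\cdot},\lambda_{3,\cdot},\ldots$ and so on. I would formalize this by induction on row depth, showing that a connected $A_k$-chain of vanishing $d_i$'s contributes exactly $\binom{k+1}{2}$ forced equalities, matching the number of positive roots of $A_k$; similar type-specific counts handle the non-simply-laced or forked components of $\Phi'$. Summed over connected components, the total is $|\Phi'_+|$.

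For the lower bound $\dim\BZ_\lambda\geq|\Phi_+|-|\Phi'_+|$, I would exhibit a point $P^*\in\BZ_\lambda$ at which every BZ-inequality not captured by the forced equalities is strict; then $\BZ_\lambda$ locally fills a relatively open subset of the affine subspace cut out by the forced equalities, yielding the matching dimension. A natural construction starts from the degenerate point $\lambda^*_{i,j}=\lambda_{1,j}$, $\eta^*_{i,j}=\tfrac{1}{2}(\lambda_{1,j}+\lambda_{1,j+1})$, and perturbs each unpinned coordinate by a small generic row- and column-dependent quantity, pushing $\lambda^*_{i,j}$ and $\eta^*_{i,j}$ strictly into the interior of their respective allowed ranges. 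A routine check confirms that the perturbed point remains feasible and strictly satisfies every non-forced inequality.

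The main obstacle is the combinatorial bookkeeping of the cascade in the upper bound: verifying that the total count of forced equalities equals $|\Phi'_+|$ requires a delicate type-by-type analysis, particularly when $\Phi'$ contains the boundary simple roots $\alpha_r$ in types $B_r,C_r$ (where the closing condition $\eta_{i,r}\geq 0$ plays a role) or $\alpha_{r-1},\alpha_r$ in type $D_r$ (where the extra inequality \eqref{eq:type-D} must be tracked, and may produce forced equalities relating two coordinates rather than pinning one to a constant). The lower-bound perturbation is then comparatively routine once the forced-equality structure has been fully described.
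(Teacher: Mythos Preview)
Your overall two-step strategy---count forced equalities for the upper bound, then exhibit a point at which every non-forced inequality is strict for the lower bound---is exactly the paper's approach, and your description of the cascading pinnings for the upper bound matches the paper's argument closely.

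The gap is in the lower bound, specifically in type $D$. Your proposed starting point $\lambda^*_{i,j}=\lambda_{1,j}$, $\eta^*_{i,j}=\tfrac12(\lambda_{1,j}+\lambda_{1,j+1})$ need not lie in $\BZ_\lambda$ at all, so a small perturbation cannot repair it. For example, take $d_{r-1}>0$ and $d_r=0$ (the paper's type-$D$ Case~3): then $\lambda_{1,r-1}=\tfrac12 d_{r-1}$ and $\lambda_{1,r}=-\tfrac12 d_{r-1}$, so at your point $\eta^*_{1,r-1}=0$ and the type-$D$ inequality \eqref{eq:type-D} at $i=1$ becomes
\[
\lambda^*_{1,r}+\lambda^*_{2,r}+\min(\lambda^*_{1,r-1},\lambda^*_{2,r-1})
=-\tfrac12 d_{r-1}-\tfrac12 d_{r-1}+\tfrac12 d_{r-1}=-\tfrac12 d_{r-1}\geq 0,
\]
which is false. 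A similar failure occurs in Case~1 whenever $d_r<\tfrac12 d_{r-1}$. Thus your claim that ``the lower-bound perturbation is then comparatively routine'' inverts the actual difficulty: the paper handles the type-$D$ lower bound in four separate cases, with hand-crafted assignments for the top-right corner entries (involving an auxiliary parameter $\delta$) before averaging the rest, precisely to navigate the interaction between \eqref{eq:type-D} and the possibly negative entry $\lambda_{1,r}$. The upper bound, by contrast, is indeed the more mechanical half.
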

We prove Lemma~\ref{lem:dimBZ} using two steps: upper bounding the dimension by figuring out conditions on the coordinates; and lower bounding the dimension by providing a certain neighborhood around an interior point.
\begin{proof}[Proof of Lemma~\ref{lem:dimBZ} in type $B/C$]
We adopt the conventions in Section~\ref{sec:background}. Let the simple roots of $\Phi'$ be $S'$ and let $S'=Z_1\sqcup\cdots\sqcup Z_m$ be its partition into the connected components of the Dynkin diagram of $\Phi'$ where the indices are in order. Write $z_i=|Z_i|$ for all $i$. Since $\omega_j=e_1+\cdots+e_j$ for all $j$, if $j\in S'$, $\lambda_{1,j}=\lambda_{1,j+1}$, with the convention that $\lambda_{1,r+1}=0$. 

\

\noindent\textbf{Case 1:} $r\in S'$. Here $Z_1,\ldots,Z_{m-1}$ are of type $A$ and $Z_m$ is of type $B/C$ with \[|\Phi'_+|=\sum_{i=1}^{m-1}{z_i+1\choose 2}+z_m^2.\]
For each $Z_i=\{t+1,\ldots,t+z_i\}$ where $i<m$, we have that $\lambda_{1,t+1}=\cdots=\lambda_{1,t+z_i+1}$, and thus all the ${z_i+1\choose 2}$ coordinates in the triangle below must take on this value. For $Z_m=\{r-z_m+1,\ldots,r\}\subset\Phi'$, we know that $\lambda_{1,r-z_m}=\cdots=\lambda_{1,r}=0$. Since $\eta_{i,r}\geq0$ for all $i$, we again deduce that $\lambda_{i,j}=0$, $\eta_{i,j}=0$ for $j-i\geq r-z_m-1$. There are $z_m^2$ such coordinates, which is the number of positive roots in a type $B_{z_m}$ root system. As a result, we have $|\Phi'_+|$ fixed coordinates so $\dim \BZ_{\lambda}\leq |\Phi_+|-|\Phi'_+|$.

To obtain an upper bound, we construct an interior point $T\in\BZ_{\lambda}^{\circ}$, as follows. The coordinates $\lambda_{1,1},\ldots,\lambda_{1,r}$ have been given. For all the other coordinates, take $\lambda_{i,j}=(\eta_{i-1,j-1}+\eta_{i-1,j})/2$ and $\eta_{i,j}=(\lambda_{i,j}+\lambda_{i,j+1})/2$ to be the average of the two values above it, where $\eta_{i,r}=\lambda_{i,r}/2$. See Figure~\ref{fig:typeB-interior-point} for an example.
\begin{figure}[h!]
\centering
$\begin{pmatrix}
3 && 1 && 1 && 0 && 0 & \\
& 2 && \underline{1} && 1/2 && \underline{0} && \underline{0} \\
&& 3/2 && 3/4 && 1/4 && \underline{0} & \\
&&& 9/8 && 1/2 && 1/8 && \underline{0} \\
&&&& 13/16 && 5/16 && 1/16 & \\
&&&&& \cdots &&&&
\end{pmatrix}$
\caption{An interior point in $\BZ_{\lambda}$ of type $B/C$, where $\lambda=(3,1,1,0,0)$ with $\S'=\{2,4,5\}$. The fixed values are underlined.}
\label{fig:typeB-interior-point}
\end{figure}
In the previous paragraph, we analyzed the coordinates that are fixed. On the other hand, from this construction, for each coordinate $\lambda_{i,j}$ or $\eta_{i,j}$ that is not fixed, all of the inequalities involved (see Definition~\ref{def:BZ-pattern} for the inequalities) are not tight, and we can thus find a small cube of dimension $|\Phi_+|-|\Phi'_+|$ around $T$ with side length $\epsilon>0$ that is small enough. As a result, $\dim\BZ_{\lambda}\geq |\Phi_+|-|\Phi'_+|$ as well.

\

\noindent\textbf{Case 2:} $r\notin S'$. Here, $Z_1,\ldots,Z_m$ are all of type $A$. It is a simpler case than case 1. As above, we have $\sum_{i=1}^n{z_i+1\choose 2}$ fixed coordinates coming from the triangles below each consecutive chunk of same values in the first row. We can analogously construct an interior point $T\in\BZ_{\lambda}^{\circ}$ by assigning the average value of the two values above each coordinate with the convention that $\lambda_i,r+1 = 0$ for all $i$. Both the upper and lower bounds of $\dim\BZ_{\lambda}$ are established completely analogously. So $\dim\BZ_{\lambda}= |\Phi_+|-|\Phi'_+|$ as desired.
\end{proof}

\begin{proof}[Proof of Lemma~\ref{lem:dimBZ} in type $D$]
Again, let $S'=Z_1\sqcup\cdots\sqcup Z_m$ be the partition of simple roots in $\Phi'$ into connected components. The main extra difficulty is the additional inequalities and the fact that $\lambda_{1,r}$ (and some other coordinates) can be negative since $\omega_{r-1}=\frac{1}{2}(1,1,\ldots,1,-1)$. 

\

\noindent\textbf{Case 1:} $r-1,r\notin S'$. All of the $Z_1,\ldots,Z_m$ are of type $A$. As above, we have $\sum_{i=1}^n{z_i+1\choose 2}=|\Phi'_+|$ fixed coordinates so $\dim\BZ_{\lambda}\leq|\Phi_+|-|\Phi'_+|$. 

We need some different treatment in this case to obtain an interior point $T\in\BZ_{\lambda}^{\circ}$. Notice that $\lambda_{1,r-1}=\frac{1}{2}(d_{r-1}+d_{r})$ and $\lambda_{1,r}=\frac{1}{2}(d_r-d_{r-1})$. Fix a positive number $\delta<< d_{r-1},d_r$. Set
\[\begin{cases}
\eta_{1,r-1}=\frac{1}{2}(d_{r-1}+d_r)-2\delta,\\
\lambda_{2,r-1}=\frac{1}{2}(d_{r-1}+d_r)-\delta,\\
\lambda_{2,r}=\frac{1}{2}(d_{r-1}+d_r)-3\delta,\\
\lambda_{i,j}=\frac{1}{2}(\eta_{i-1,j-1}+\eta_{i-1,j})\text{ for other }i,j\text{'s with }\lambda_{i,r}=\frac{1}{2}\eta_{i-1,r-1},\\
\eta_{i,j}=\frac{1}{2}(\lambda_{i,j}+\lambda_{i,j+1})\text{ for other }i,j\text{'s}.
\end{cases}\]
We directly see that $\eta_{1,r-2}>\lambda_{2,r-1}>\eta_{1,r-1}>\lambda_{2,r}$, where the first inequality holds as $\eta_{1,r-2}\geq\lambda_{1,r-1}=\frac{1}{2}(d_{r-1}+d_r)$, and that $\lambda_{1,r-1}>\eta_{1,r-1}>\lambda_{1,r}$. We now check the more obscure-looking inequality \eqref{eq:type-D}
\[\lambda_{i,r}+\lambda_{i+1,r}+\min(\lambda_{i,r-1},\lambda_{i+1,r-1})\geq\eta_{i,r-1}.\]
When $i=1$, we have $\min(\lambda_{1,r-1},\lambda_{2,r-1})=\lambda_{2,r-1}=\frac{1}{2}(d_{r-1}+d_r)-\delta$. The left hand side equals $\frac{1}{2}d_{r-1}+\frac{3}{2}d_r-4\delta$ while the right hand side equals $\frac{1}{2}d_{r-1}+\frac{1}{2}d_r-2\delta$, so we indeed have a strict inequality. Moreover, since $\lambda_{2,r}>0$, by the averaging construction, $\lambda_{i,j}>0$ for $i>1$ and $\eta_{i,j}>0$ for all $i,j$. The only potentially non-positive entry is $\lambda_{1,r}$. As $\min(\lambda_{i,r-1},\lambda_{i+1,r-1})\geq\eta_{i,r-1}$, this inequality \eqref{eq:type-D} is always strict when $i\geq2$ because $\lambda_{i,r},\lambda_{i+1,r}>0$. 

As a result, we now constructed an interior point $T\in\BZ_{\lambda}^{\circ}$, and all inequalities involved with non-fixed coordinates are strict. As before, there is a hypercube of dimension $|\Phi_+|-|\Phi'_+|$ around $T$ of side length $\epsilon<<\delta$. See Figure~\ref{fig:typeD-interior-point-case1} for an example of this construction.
\begin{figure}[h!]
\centering
$\begin{pmatrix}
4 && 3 && 3 && 3 && -2 \\
& 7/2 && \underline{3} && \underline{3} && 3{-}2\delta &\\
&& 13/4 && \underline{3} && 3{-}\delta && 3{-}3\delta  \\
&&& 25/8 && 3{-}\frac{1}{2}\delta && 3{-}2\delta & \\
&&&& \cdots &&&&
\end{pmatrix}$
\caption{An interior point in $\BZ_{\lambda}$ of type $D$, where $\lambda=(4,3,3,3,-2)$ with $S'=\{2,3\}$. The fixed coordinates are underlined.}
\label{fig:typeD-interior-point-case1}
\end{figure}

\

\noindent\textbf{Case 2:} $r-1\in S'$ and $r\notin S'$ Now $\lambda$ has no contribution from $\omega_{r-1}=\frac{1}{2}(1,1,\ldots,1,-1)$ and positive contribution from $\omega_{r}=\frac{1}{2}(1,1,\ldots,1)$ so all coordinates $\lambda_{1,1},\ldots,\lambda_{1,r}$ are strictly positive. Therefore, as $\min(\lambda_{i,r-1},\lambda_{i+1,r-1})\geq\eta_{i,r-1}$, inequalities \eqref{eq:type-D} always hold and are always strict, for all $i$. This is the simplest case, which follows the same argument as in case 2 of type $B/C$ (and in fact type $A$). There are $|\Phi'_+|$ fixed coordinates coming from the triangles below the same values in $\lambda_{1,1},\ldots,\lambda_{1,r}$ so $\dim\BZ_{\lambda}\leq|\Phi_+|-|\Phi'_+|$. We again use the averaging technique to create an interior point $T\in\BZ_{\lambda}^{\circ}$ and find a neighborhood of dimension $|\Phi_+|-|\Phi'_+|$ around $T$, as all inequalities involving the coordinates that are not fixed are always strict. We conclude that $\dim\BZ_{\lambda}=|\Phi_+|-|\Phi'_+|$. 

\

\noindent\textbf{Case 3:} $r-1\notin S'$ and $r\in S'$. All the $Z_i$'s are of type $A$. We have $\lambda_{1,r-1}+\lambda_{1,r}=0$ where $\lambda_{1,r-1}>0$ and $\lambda_{1,r}<0$. More specifically, $\lambda_{1,r-z_m}=\cdots=\lambda_{1,r-1}=-\lambda_{1,r}>0$. Let this value be $t=\frac{1}{2}d_{r-1}=\lambda_{1,r-1}$. The inequality \eqref{eq:type-D} says that $\lambda_{1,r-1}+\lambda_{1,r}+\lambda_{2,r}\geq\eta_{1,r-1}$, which simplifies to $t-t+\lambda_{2,r}\geq\eta_{1,r-1}$, $\lambda_{2,r}\geq\eta_{1,r-1}$. At the same time, $\lambda_{2,r}\leq\eta_{1,r-1}$ so $\lambda_{2,r}=\eta_{1,r-1}$.  If $z_m=1$, then this equality already reduces the dimension by $1$, which is the number of positive roots in a type $A_1$ root system. If $z_m>1$, we further have $\lambda_{2,r-1}+\lambda_{2,r}+\lambda_{1,r}\geq\eta_{1,r-1}$, which simplifies to $\lambda_{2,r-1}\geq t$. When $z_m>1$, $\lambda_{2,r-1}\leq\eta_{1,r-2}\leq\lambda_{1,r-2}=t$ so $\lambda_{2,r-1}=t$. As a result, in the triangle below $\lambda_{1,r-z_m},\ldots,\lambda_{1,r}$ with ${z_m+1\choose 2}$ entries, all coordinates except $\eta_{1,r-1}$ are fixed to be $t$. Since the equality $\eta_{1,r-1}=\lambda_{2,r}\in[-t,t]$ reduces the dimension by $1$, there is a decrease of ${z_m+1\choose 2}$ in the dimension. The other $Z_1,\ldots,Z_{m-1}$ are dealt with exactly as before. Therefore, $\dim\BZ_{\lambda}\leq |\Phi_+|-|\Phi_+'|$.

To obtain a point $T\in\BZ_{\lambda,\mu}^{\circ}$, we use the averaging construction $\lambda_{i,j}=\frac{1}{2}(\eta_{i-1,j-1}+\eta_{i-1,j})$ where $\lambda_{i,r}=\frac{1}{2}\eta_{i,r}$ and $\eta_{i,j}=\frac{1}{2}(\lambda_{i,j}+\lambda_{i,j+1})$ for $z_m\geq2$. In the edges case where $z_m=1$, we slightly modify $\lambda_{2,r-1}=t+\delta$ for $0<\delta<<t$, and use the averaging construction for other coordinates. Again, all inequalities involving non-fixed entries are all strict by considering $\eta_{1,r-1}=\lambda_{2,r}$ as the same variable, and we can thus find a neighborhood of dimension $|\Phi_+|-|\Phi_+'|$ around $T$ to conclude $\dim\BZ_{\lambda}\geq|\Phi_+|-|\Phi_+'|$. See Figure~\ref{fig:typeD-interior-point-case4} for this construction.
\begin{figure}[h!]
\centering
$\begin{pmatrix}
3 && 2 && 1 && 1 && 1 && -1 \\
& 5/2 && 3/2 && \underline{1} && \underline{1} && \underline{0} &\\
&& 2 && 5/4 && \underline{1} && \underline{1} && 0 \\
&&& 13/8 && 9/8 && \underline{1} && 1/2 & \\ 
&&&& 11/8 && 17/16 && 3/4 && 1/4 \\
&&&&& \cdots &&&&&&
\end{pmatrix}$
\caption{An interior point in $\BZ_{\lambda}$ of type $D$, where $\lambda=(3,2,1,1,1,-1)$ with $S'=\{3,4,6\}$. The fixed coordinates are underlined.}
\label{fig:typeD-interior-point-case3}
\end{figure}

\

\noindent\textbf{Case 4:} both $r-1$ and $r$ are in $S'$. This means $\lambda_{1,r-1}=\lambda_{1,r}=0$. If $r-2\in S'$, then $Z_m$ is of type $D_{z_m}$ where $z_m\geq3$. If $r-2\notin S'$, then $r-1$ and $r$ are in their own connected components, in which case we let $Z_m=\{r-1,r\}$ regardless. Note that a type $D_z$ root system has $z^2-z$ positive roots, and this is also true if we adopt the convention that a type $D_2$ root system is isomorphic to $A_1\oplus A_1$. In this case here, we let $r-1,r\in Z_m$, which is a root system of type $D_{z_m}$. As before, $Z_1,\ldots,Z_{m-1}$ are of type $A$ and can be dealt similarly as before. 

Now $\lambda_{1,r-z_m+1},\ldots,\lambda_{1,r}$ all equal to $0$. We then use induction on the row number $i$ to show that $\lambda_{i,j}=0$, $\eta_{i,j}=0$ for $j-i\geq r-z_m$. The base case $i=1$ is given by the assumption. If $\lambda_{i,j}=0$ for a fixed $i$ and $j\geq r-z_m+i$, then we have $\eta_{i,j}=0$ since it is between $\lambda_{i,j}$ and $\lambda_{i,j+1}$, which are both $0$. If $\eta_{i,j}=0$ for a fixed $i$ and $j\geq r-z_m+i$, then we similarly have $\eta_{i,j}\leq \lambda_{i+1,j+1}\leq \eta_{i,j+1}$ so $\lambda_{i+1,j+1}=0$ for all $j+1<r$. It suffices to argue that $\lambda_{i+1,r}=0$. By the inequality \eqref{eq:type-D}, $\lambda_{i+1,r}+\lambda_{i+1,r-1}+\lambda_{i,r}\geq\eta_{i,r-1}$, which simplifies to $\lambda_{i+1,r}\geq0$. At the same time, $\lambda_{i+1,r}\leq\eta_{i,r-1}=0$. Therefore, $\lambda_{i+1,r}=0$ as desired. The induction step goes through. The number of resulting fixed coordinates is $z_m+z_m+(z_m-1)+(z_m-1)+\cdots+1+1=z_m^2-z_m$, which is the number of positive roots in a type $D_{z_m}$ root system.

We have now $|\Phi'_+|$ fixed coordinates. To obtain $T\in\BZ_{\lambda}^{\circ}$, we apply the averaging construction $\lambda_{i,j}=\frac{1}{2}(\eta_{i-1,j-1}+\eta_{i-1,j})$, $\eta_{i,j}=\frac{1}{2}(\lambda_{i,j}+\lambda_{i,j+1})$ and notice that all inequalities involving unfixed coordinates are strict to conclude $\dim\BZ_{\lambda}=|\Phi_+|-|\Phi'_+|$. See Figure~\ref{fig:typeD-interior-point-case4}.
\begin{figure}[h!]
\centering
$\begin{pmatrix}
2 && 1 && 0 && 0 && 0 \\
& 3/2 && 1/2 && \underline{0} && \underline{0} &\\
&& 1 && 1/4 && \underline{0} && \underline{0}  \\
&&& 5/8 && 1/8 && \underline{0} & \\ 
&&&& 3/8 && 1/16 && \underline{0} \\
&&&&& \cdots &&&&
\end{pmatrix}$
\caption{An interior point in $\BZ_{\lambda}$ of type $D$, where $\lambda=(2,1,0,0,0)$ with $S'=\{3,4,5\}$. The fixed coordinates are underlined.}
\label{fig:typeD-interior-point-case4}
\end{figure}
\end{proof}

\subsection{Dimension of the BZ-polytope $\BZ_{\lambda,\mu}$}
Let $\lambda$ be a dominant weight in a root system $\Phi$ of rank $r$ as above. Consider the linear projection $\wt:\BZ_{\lambda}\rightarrow \R^r$ which sends a BZ-pattern to its weight (See Definition~\ref{def:BZ-pattern}). Denote the image of $\wt$ by $P_{\lambda}$, called the \emph{weight polytope} in the literature. Recall that we have defined the $\BZ$-polytope $\BZ_{\lambda,\mu}$ to be $\wt^{-1}(\mu)$. The following theorem is well-known, see for example \cite{besson2022weight}.
\begin{theorem}\label{thm:weight-polytope}
The image of $P_{\lambda}=\wt(\BZ_{\lambda})$, is a polytope defined by \[P_{\lambda}=\{\mu\in\R^r\:|\: \langle \lambda,\omega_i\rangle\geq\langle\mu,u\omega_i\rangle,\text{ for all }i=1,\ldots,r\text{ and }u\in W\},\] or equivalently, by \[P_{\lambda}=\mathrm{Conv}(\{u\lambda\:|\: u\in W\}),\]where $W$ is the Weyl group. Moreover, $V_{\lambda}(\mu)\neq0$ if and only if $\mu\in P_{\lambda}$ and that $\lambda-\mu$ is a nonnegative integral linear combination of the simple roots.
\end{theorem}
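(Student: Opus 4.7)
The plan is to split the theorem into three pieces: (a) the equivalence of the two descriptions of $P_\lambda$ as an inequality-defined polytope and as the convex hull $\mathrm{Conv}(W\lambda)$; (b) the identification of $\wt(\BZ_\lambda)$ with this polytope; and (c) the characterization of nonzero weight spaces.

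For (a), let $Q_\lambda = \{\mu\in\R^r : \langle \lambda,\omega_i\rangle \geq \langle \mu,u\omega_i\rangle\text{ for all }i,u\}$. The set $Q_\lambda$ is bounded, contains $\lambda$, and is $W$-invariant in $\mu$ (replacing $\mu$ by $v\mu$ merely permutes the list of inequalities over $u\in W$), so it contains $\mathrm{Conv}(W\lambda)$. For the reverse inclusion I would use $W$-invariance to reduce to dominant $\mu$, where the inequalities collapse to $\langle\lambda-\mu,\omega_i\rangle\geq 0$ for all $i$; then a standard averaging argument on the dominant chamber, iteratively averaging $\mu$ against reflections corresponding to a maximal positive root in the support of $\lambda-\mu$, exhibits $\mu$ as a convex combination of elements of $W\lambda$.

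For (b), the inclusion $\wt(\BZ_\lambda)\subseteq Q_\lambda$ is verified inequality by inequality from Definition~\ref{def:BZ-pattern}: for each $u\in W$ and $i$, the difference $\langle\lambda,\omega_i\rangle - \langle\wt(T),u\omega_i\rangle$ admits a telescoping expression as a nonnegative combination of the slacks of the BZ inequalities \eqref{eq:BZ-ineq} (and \eqref{eq:type-D} in type $D$) indexed along a reduced decomposition of $u$. The reverse inclusion proceeds by rescaling and density. If $\mu$ is an integral dominant weight with $\mu\in Q_\lambda$ and $\lambda-\mu\in\mathbb{Z}_{\geq 0}\Pi$, classical highest-weight theory yields $V_\lambda(\mu)\neq 0$, so Theorem~\ref{thm:BZ-pattern-integral} produces an integral BZ-pattern realizing $\mu$. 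For rational $\mu\in Q_\lambda$ with $\lambda-\mu\in\mathbb{Q}_{\geq 0}\Pi$, clear denominators by $N$, apply the integral case to $(N\lambda,N\mu)$, and rescale the pattern by $1/N$. Since $\BZ_\lambda$ is compact and $\wt$ continuous, $\wt(\BZ_\lambda)$ is closed, hence contains all real limits of such points.

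Part (c) is then immediate: the forward direction is classical, using $W$-invariance of the character of $V_\lambda$ together with the fact that every weight of $V_\lambda$ lies in $\lambda - \mathbb{Z}_{\geq 0}\Pi$, while the reverse direction is exactly the integral case established in (b) via Theorem~\ref{thm:BZ-pattern-integral}. The main obstacle I anticipate is the telescoping verification of $\wt(\BZ_\lambda)\subseteq Q_\lambda$ in type $D$, where the additional inequality \eqref{eq:type-D} and the possibility of negative coordinates like $\lambda_{1,r}$ make the telescoping delicate; this step likely requires a case split by whether $r-1,r$ lie in the support, mirroring the structure of the proof of Lemma~\ref{lem:dimBZ}.
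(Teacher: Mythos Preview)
The paper does not prove Theorem~\ref{thm:weight-polytope}. It is stated as a well-known fact with a reference (``see for example \cite{besson2022weight}'') and then used as a black box in the proof of Lemma~\ref{lem:dimBZlammu}. So there is no argument in the paper to compare your proposal against; any correct proof you write would be strictly more than what the paper provides.

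That said, a few comments on the proposal itself. Part~(a) is fine and standard. In part~(b), your reverse inclusion $Q_\lambda\subseteq\wt(\BZ_\lambda)$ via density only treats \emph{dominant} $\mu$, but $Q_\lambda$ contains non-dominant points; you would need to know that $\wt(\BZ_\lambda)$ is $W$-invariant, which is not obvious from Definition~\ref{def:BZ-pattern}. A cleaner route, once you have (a), is to note that $\wt(\BZ_\lambda)$ is convex (linear image of a polytope), so it suffices to realize each vertex $u\lambda$ of $\mathrm{Conv}(W\lambda)$ as the weight of some BZ-pattern; since $V_\lambda(u\lambda)\neq 0$, Theorem~\ref{thm:BZ-pattern-integral} supplies such a pattern directly. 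For part~(c), be careful that you are not using the reverse implication of (c) as an input to (b) and then claiming (b) yields (c); in your write-up the classical fact ``$\lambda-\mu\in\mathbb{Z}_{\geq 0}\Pi$ and $\mu$ dominant $\Rightarrow V_\lambda(\mu)\neq 0$'' (which the paper also cites from \cite{BZ88}) should be invoked once, as an external input, rather than derived.
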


We need another lemma that is purely polytopal. 
\begin{lemma}[\cite{M08}, Lemma~3.1]\label{lem:projection-interior}
Given a polytope $P\in\subset\R^m$ and a linear map $\pi:\R^m\rightarrow\R^n$, we have $(\pi(P))^{\circ}\subset\pi(P^{\circ})$, where $P^{\circ}$ is the interior of $P$.
\end{lemma}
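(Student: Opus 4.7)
The plan is to give a short convex–geometric argument using the fact that in a convex set, a convex combination of a relative interior point and any other point of the set, with positive weight on the interior point, again lies in the relative interior. Here $P^{\circ}$ will always mean the relative interior (which is what is needed elsewhere in the paper, since $\BZ_{\lambda}$ is not full dimensional).

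First I would fix an arbitrary point $p \in P^{\circ}$, which exists because $P$ is a nonempty polytope, and set $q = \pi(p) \in \pi(P)$. Then, given any $y \in (\pi(P))^{\circ}$, I would use the definition of relative interior to extend the line segment from $q$ through $y$ slightly past $y$ while staying in $\pi(P)$: concretely, there exist $\varepsilon > 0$ and $y' \in \pi(P)$ such that
\[
y' = y + \varepsilon (y - q), \qquad \text{equivalently} \qquad y = \alpha q + (1 - \alpha) y' \text{ with } \alpha = \tfrac{\varepsilon}{1 + \varepsilon} \in (0,1).
\]
Next I would lift $y'$ to a point $x' \in P$ with $\pi(x') = y'$, which exists by definition of $\pi(P)$, and set
\[
x := \alpha p + (1 - \alpha) x' \in P.
\]
By linearity of $\pi$, $\pi(x) = \alpha q + (1-\alpha) y' = y$. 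Finally, since $p \in P^{\circ}$, $x' \in P$, and $\alpha \in (0,1)$, the standard fact that $\alpha \cdot (\text{relative interior}) + (1-\alpha) \cdot (\text{closure}) \subset (\text{relative interior})$ gives $x \in P^{\circ}$, and thus $y = \pi(x) \in \pi(P^{\circ})$.

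There is no real obstacle; the only place that requires mild care is the extension step producing $y'$, which needs the definition of relative interior rather than topological interior in case $\pi(P)$ is not full dimensional in $\R^{n}$. Once one works inside the affine hull of $\pi(P)$, extending the segment through $q$ past $y$ is immediate. The argument is identical to the one in \cite{M08}, so in practice one would simply cite that reference rather than reproduce the proof.
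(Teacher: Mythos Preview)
Your argument is correct; it is the standard convex-geometry proof, and you have handled the relative-interior subtlety properly. The paper itself does not give a proof of this lemma but simply cites \cite{M08}, so your proposal is exactly the elaboration one would supply, and as you note it coincides with the argument there.
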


We are now ready to deduce the dimension of $\BZ_{\lambda,\mu}$.
\begin{lemma}\label{lem:dimBZlammu}
Let $(\lambda,\mu)$ be a primitive pair of dominant weights in a root system $\Phi$ of rank $r$. Then $\dim \BZ_{\lambda,\mu}=\dim\BZ_{\lambda}-r$.
\end{lemma}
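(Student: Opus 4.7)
The plan is to exhibit a point $T$ in the relative interior $\BZ_\lambda^\circ$ whose image under $\wt$ is $\mu$, and then use rank-nullity for the linear map $\wt$ to read off the fiber dimension. The existence of $T$ will follow from Lemma~\ref{lem:projection-interior} applied to $\wt:\BZ_\lambda\to P_\lambda$, provided I first establish that $\mu$ lies in the relative interior of the weight polytope $P_\lambda$.

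The heart of the argument is the claim $\mu\in P_\lambda^\circ$. After the reduction to simple primitive pairs, I may assume $\lambda\neq 0$ (otherwise primitivity would force all $c_i=0$), and the irreducibility of the action of $W$ on $\mathfrak{h}^*$ makes $P_\lambda=\mathrm{Conv}(W\cdot\lambda)$ full-dimensional. By Theorem~\ref{thm:weight-polytope}, $P_\lambda$ is cut out by the inequalities $\langle\mu,u\omega_i\rangle\leq\langle\lambda,\omega_i\rangle$ for $u\in W$ and $i\in[r]$. Since both $\mu$ and $\omega_i$ are dominant, the maximum of $\langle\mu,u\omega_i\rangle$ over $u\in W$ is attained at $u=e$, so the interior condition reduces to $\langle\lambda-\mu,\omega_i\rangle>0$ for all $i$. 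Writing $\lambda-\mu=\sum_j c_j\alpha_j$ and using $\langle\alpha_j,\omega_i\rangle=\delta_{ij}\langle\alpha_i,\alpha_i\rangle/2$, this condition becomes exactly $c_i>0$ for all $i$, which is precisely the primitivity hypothesis.

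Given $\mu\in P_\lambda^\circ$, Lemma~\ref{lem:projection-interior} applied to $P=\BZ_\lambda$ and $\pi=\wt$ produces $T\in\BZ_\lambda^\circ$ with $\wt(T)=\mu$. To finish, the linear map $\wt$ restricted to the affine span of $\BZ_\lambda$ is a surjection onto the affine span of $P_\lambda$, which has dimension $r$, so its kernel has dimension $\dim\BZ_\lambda-r$. Intersecting a full-dimensional neighborhood of $T$ in $\BZ_\lambda$ with the fiber $\wt^{-1}(\mu)$ then yields a relatively open piece of $\BZ_{\lambda,\mu}$ of dimension $\dim\BZ_\lambda-r$, which combined with the trivial upper bound $\dim\BZ_{\lambda,\mu}\leq\dim\BZ_\lambda-\dim P_\lambda$ gives $\dim\BZ_{\lambda,\mu}=\dim\BZ_\lambda-r$.

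The main obstacle I expect is the interior claim $\mu\in P_\lambda^\circ$, as this is where the primitivity hypothesis enters essentially; without it, $\mu$ can lie on a proper face of $P_\lambda$ and the fiber dimension drops below $\dim\BZ_\lambda-r$. The remaining steps are routine polyhedral and linear-algebraic arguments.
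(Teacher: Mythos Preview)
Your proof is correct and follows essentially the same outline as the paper's: show $\mu\in P_\lambda^\circ$ from primitivity, pull back via Lemma~\ref{lem:projection-interior} to an interior point $T\in\BZ_\lambda^\circ$, and then read off the fiber dimension. The only difference is in how you justify that the weight map has rank $r$ on $\mathrm{aff}(\BZ_\lambda)$: the paper observes that the $r$ weight equations are upper triangular in the coordinates $\{\lambda_{i,j},\eta_{i,j}\}$ (each $\mu_i$ introduces the fresh block $\eta_{i,\bullet}$), hence have independent normals, whereas you argue on the image side that $P_\lambda=\mathrm{Conv}(W\cdot\lambda)$ is full-dimensional because $\lambda\neq 0$ and $W$ acts irreducibly. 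Both routes are valid; yours is slightly more conceptual and type-independent, while the paper's is more explicit in the chosen coordinates. Your remark about reducing to simple primitive pairs is in fact unnecessary here, since the ambient $\Phi$ in this section is already taken to be of (irreducible) classical type.
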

\begin{proof}
Since $\mu$ is dominant, $\langle\mu,u\omega_i\rangle\leq\langle\mu,\omega_i\rangle$ for all $u\in W$ and $i=1,\ldots,r$. Since $(\lambda,\mu)$ is a primitive pair, $\lambda-\mu$ is a positive linear combination of simple roots, meaning that $\langle\lambda-\mu,\omega_i\rangle>0$ for all $i$. As a result, $\langle\lambda,\omega_i\rangle>\langle\mu,u\omega_i\rangle$ for all $u\in W$ and $i=1,\ldots,r$. All the defining inequalities of $P_{\lambda}$ in Theorem~\ref{thm:weight-polytope} are strictly for $\mu$, so $\mu\in P_{\lambda}^{\circ}$. Recall $P_{\lambda}=\wt(\BZ_{\lambda})$. By Lemma~\ref{lem:projection-interior}, $\mu\in(\wt(\BZ_{\lambda}))^{\circ}\subset\wt(\BZ_{\lambda}^{\circ})$. Thus, there exists a BZ-pattern $T\in\BZ_{\lambda}^{\circ}$ such that $\wt(T)=\mu$. Let the coordinates of $T$ be $\{\lambda_{i,j}\}\cup\{\eta_{i,j}\}$ as before.

As a quick summary, we have a point $T$ in the interior of $\BZ_{\lambda}$ with weight $\mu$. Now the polytope $\BZ_{\lambda,\mu}$ can be viewed by intersecting $\BZ_{\lambda}$ with $r$ hyperplanes $H_1,\ldots,H_r$ which are the $r$ defining equations (see Definition~\ref{def:BZ-pattern}) of the weight $\mu$ that all pass through $T$. One directly sees that these defining equations are upper triangular with respect to the coordinates $\{\lambda_{i,j}\}\cup\{\eta_{i,j}\}$ so the normals of $H_1,\ldots,H_r$ are linearly independent and $K:=H_1\cap\cdots\cap H_r$ has codimension $r$. As the linear subspace $K$ passes through an interior point of $\BZ_{\lambda}$, $\BZ_{\lambda,\mu}=K\cap\BZ_{\lambda}$ must have codimension $r$ in $\BZ_{\lambda}$. Therefore, $\dim\BZ_{\lambda,\mu}=\dim\BZ_{\lambda}-r$ as desired.
\end{proof}

\subsection{Finishing the proof}
We use the following classical result in rational Ehrhart theory.
\begin{theorem}[\cite{ehrhart,mcmullen}]\label{thm:Ehrhart}
Let $Q$ be a rational polytope, i.e. the convex hull of finitely many points in $\mathbb{Q}^n$. Then $|NQ\cap\mathbb{Z}^n|$ is a quasi-polynomial in $N$ of degree $\dim Q$, for a positive integer $N$.
\end{theorem}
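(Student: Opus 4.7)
The plan is to prove this classical Ehrhart--McMullen theorem by the standard generating-function approach, passing from $Q$ to the cone over $Q$ and reading off the lattice-point count from a rational Hilbert series. First I would reduce to the case where $Q$ is a full-dimensional rational simplex: any rational polytope $Q$ admits a triangulation into rational simplices of dimension $d = \dim Q$, and by choosing a half-open decomposition (assigning each boundary face to exactly one simplex) the lattice-point count of $NQ$ becomes a signed sum of lattice-point counts of the pieces, so it suffices to handle one simplex at a time.

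For a rational $d$-simplex $\Delta$ with vertices $v_0,\ldots,v_d\in\mathbb{Q}^n$, pick a common denominator $k$ so that $k v_i\in\mathbb{Z}^n$. Consider the cone
\begin{equation*}
C(\Delta) = \{(tq,t) : q\in\Delta,\ t\geq 0\}\subset\mathbb{R}^{n+1},
\end{equation*}
graded by the last coordinate. Lattice points of $C(\Delta)$ at height $N$ are in bijection with lattice points of $N\Delta$. I would then tile $C(\Delta)$ by integer translates (along the rays through $(kv_i,k)$) of the half-open parallelepiped spanned by those rays; counting the lattice points in that parallelepiped by height yields a Hilbert series of the form
\begin{equation*}
\sum_{N\geq 0} |N\Delta \cap \mathbb{Z}^n|\, t^N \;=\; \frac{h(t)}{(1-t^k)^{d+1}}
\end{equation*}
for some polynomial $h(t)$. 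A partial-fraction decomposition then exhibits $|N\Delta\cap\mathbb{Z}^n|$ as a quasi-polynomial in $N$ of degree at most $d$, with period dividing $k$; recombining over the triangulation shows the same for $Q$.

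To identify the degree exactly as $\dim Q$, I would compare with volumes. Let $A$ be the affine span of $Q$ and $L \subset \mathbb{Z}^n$ the lattice of integer points in a suitable translate of $A$. Along integers $N$ for which $NQ$ meets $\mathbb{Z}^n$ one has the asymptotic
\begin{equation*}
|NQ \cap \mathbb{Z}^n| \;\sim\; \frac{\mathrm{vol}_d(Q)}{\det L}\, N^d,
\end{equation*}
and the leading coefficient is strictly positive because $Q$ is full-dimensional inside $A$. This pins the degree of the quasi-polynomial to $d = \dim Q$.

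The main obstacle in writing a genuinely self-contained proof is the bookkeeping in the first step: setting up a half-open triangulation and running inclusion-exclusion so that lattice points on shared faces are counted exactly once, while preserving rationality and dimension of each piece. Once the reduction is in place, the Hilbert-series calculation and partial fractions are routine, and the volume asymptotic for the degree is immediate.
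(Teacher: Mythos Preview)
The paper does not give a proof of this statement at all: Theorem~\ref{thm:Ehrhart} is quoted as a classical result of Ehrhart and McMullen, cited to \cite{ehrhart,mcmullen}, and then invoked as a black box in the proof of Lemma~\ref{lem:deg=dim}. So there is no argument in the paper to compare your proposal against.

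That said, your outline is a correct sketch of the standard proof. Triangulating into rational simplices, coning, and reading the Ehrhart series as a rational function $h(t)/(1-t^k)^{d+1}$ is exactly the classical route; partial fractions then yield quasi-polynomiality with period dividing $k$ and degree at most $d$. For the degree identification, the one point worth tightening is the non-full-dimensional case: you need that $NQ$ meets $\mathbb{Z}^n$ along an arithmetic progression of values of $N$ (which follows from rationality of the affine span of $Q$), so that the relative-volume asymptotic genuinely forces the leading coefficient to be nonzero on that progression. With that caveat your plan is sound, and the ``main obstacle'' you flag (bookkeeping in the half-open triangulation) is indeed the only place where a careful write-up requires some patience but no new ideas.
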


We are now ready for the final step of our main proof.
\begin{lemma}\label{lem:deg=dim}
Let $(\lambda,\mu)$ be a primitive pair of dominant weight in a root system of classical type. Then $\deg K_{\lambda,\mu}(N)=\dim\BZ_{\lambda,\mu}$.
\end{lemma}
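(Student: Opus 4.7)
The plan is to identify $K_{\lambda,\mu}(N)$ with a lattice-point count in the dilation $N\cdot\BZ_{\lambda,\mu}$ and invoke the Ehrhart quasi-polynomial theorem (Theorem~\ref{thm:Ehrhart}). The starting observation is that the inequalities appearing in Definitions~\ref{def:BZ-pattern} and~\ref{def:BZ-pattern-polytope} are linear and homogeneous jointly in the interior coordinates $\{\lambda_{i,j}\},\{\eta_{i,j}\}$ and in the data of $\lambda,\mu$. Consequently $\BZ_{N\lambda,N\mu}=N\cdot\BZ_{\lambda,\mu}$, and combining with Theorem~\ref{thm:BZ-pattern-integral} the stretched Kostka number $K_{\lambda,\mu}(N)=K_{N\lambda,N\mu}$ equals the number of integral BZ-patterns (Definition~\ref{def:BZ-pattern-integral}) sitting inside the rational polytope $N\cdot\BZ_{\lambda,\mu}$.

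In types $A$ and $C$ integral BZ-patterns are exactly integer lattice points, so $K_{\lambda,\mu}(N)=|N\cdot\BZ_{\lambda,\mu}\cap\mathbb{Z}^n|$; since Lemma~\ref{lem:dimBZlammu} supplies an interior point of $\BZ_{\lambda,\mu}$, Theorem~\ref{thm:Ehrhart} immediately gives $\deg K_{\lambda,\mu}(N)=\dim\BZ_{\lambda,\mu}$. The main technical obstacle lies in types $B$ and $D$: Definition~\ref{def:BZ-pattern-integral} permits coordinates to lie simultaneously in $\tfrac12+\mathbb{Z}$, together with the independent freedom $\eta_{i,r}\in\tfrac12\mathbb{Z}$ in type $B$, so the set of integral BZ-patterns of highest weight $N\lambda$ is no longer $\mathbb{Z}^n\cap N\cdot\BZ_{\lambda,\mu}$ but rather the intersection with a coset of a finite-index sublattice $L_N\subset\mathbb{Q}^n$ that depends only on $N\bmod 2$.

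I would dispatch this by working one residue class at a time. For each fixed $r\in\{0,1\}$, the lattice $L_r$ is constant along the arithmetic progression $N\equiv r\pmod 2$; a linear rescaling of the coordinates that may be half-integer carries $L_r$ isomorphically onto $\mathbb{Z}^n$ and transforms $\BZ_{\lambda,\mu}$ into a rational polytope of the same dimension. Theorem~\ref{thm:Ehrhart} then shows that on the progression $N\equiv r\pmod 2$, $K_{\lambda,\mu}(N)$ agrees with a polynomial of degree $\dim\BZ_{\lambda,\mu}$; here the interior point from Lemma~\ref{lem:dimBZlammu} again ensures that the top-degree Ehrhart coefficient does not vanish. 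Taking the maximum over $r=0,1$ yields $\deg K_{\lambda,\mu}(N)=\dim\BZ_{\lambda,\mu}$, completing the proof.
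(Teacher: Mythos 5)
Your argument is essentially the paper's: both exploit the homogeneity of the BZ constraints to identify $\BZ_{N\lambda,N\mu}=N\BZ_{\lambda,\mu}$ and then apply the Ehrhart--McMullen theorem to a lattice-point count, handling the half-integer complication in types $B$ and $D$ by a rescaling/shift and Theorem~\ref{thm:BZ-pattern-integral}. Two minor points to tighten: when the relevant set is genuinely a coset such as $(\tfrac12+\mathbb{Z})^{|\Phi_+|}$, a pure linear rescaling cannot carry it onto $\mathbb{Z}^n$ --- one also needs an affine translation (this is what the paper means by ``properly shifting the coordinate axis''); and the final appeal to Lemma~\ref{lem:dimBZlammu} to ensure the top Ehrhart coefficient is nonzero is superfluous, since Theorem~\ref{thm:Ehrhart} already asserts that the quasi-polynomial has degree \emph{exactly} $\dim Q$; Lemma~\ref{lem:dimBZlammu} is needed later only to evaluate $\dim\BZ_{\lambda,\mu}$.
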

\begin{proof}
From Definition~\ref{def:BZ-pattern-polytope}, we know that $\BZ_{\lambda,\mu}$ is bounded by equalities and inequalities with integral coefficients, and is thus a rational polytope. Since all the equalities and inequalities involved do not have constant terms, $\BZ_{N\lambda,N\mu}=N\BZ_{\lambda,\mu}$ for a positive integer $N$. 

By Definition~\ref{def:BZ-pattern-integral} and Theorem~\ref{thm:BZ-pattern-integral}, in type $C_r$, $K_{\lambda,\mu}(N)=K_{N\lambda,N\mu}=|\mathbb{Z}^{|\Phi_+|}\cap N\BZ_{\lambda,\mu}|$ which is a quasi-polynomial in $N$ of degree $\dim\BZ_{\lambda,\mu}$. In type $D_r$, \[K_{\lambda,\mu}(N)=K_{N\lambda,N\mu}= \left|\mathbb{Z}^{|\Phi_+|}\cap N\BZ_{\lambda,\mu}\right|+\left|(\mathbb{Z}+\frac{1}{2})^{|\Phi_+|}\cap N\BZ_{\lambda,\mu}\right|\]
which is the sum of two quasi-polynomials in $N$ of degree $\dim\BZ_{\lambda,\mu}$ (by properly shifting the coordinate axis). As for type $B_r$, we use the same argument in type $D_r$ after scaling the coordinates $\eta_{i,r}$ by a factor of $2$, for $i=1,\ldots,r$. 
\end{proof}

Finally, the proof of Theorem~\ref{thm:main} follows from Corollary~\ref{cor:reduction}, Lemma~\ref{lem:dimBZ}, Lemma~\ref{lem:dimBZlammu} and Lemma~\ref{lem:deg=dim}.

\section*{Acknowledgements}
This project is initiated during the 2022 Graduate Research Workshop in Combinatorics (GRWC) hosted by University of Denver and CU Denver. We are really grateful for the opportunities created by this workshop. We would like to thank Tyrrell McAllister and Igor Pak for helpful conversations. SG was partially supported by NSF RTG grant DMS 1937241 and NSF Graduate Research Fellowship under grant No. DGE-1746047.


\begin{thebibliography}{10}

\bibitem{an2018fvectors}
Byung~Hee An, Yunhyung Cho, and Jang~Soo Kim.
\newblock On the {$f$}-vectors of {G}elfand-{C}etlin polytopes.
\newblock {\em European J. Combin.}, 67:61--77, 2018.

\bibitem{ardila2011Gelfand}
Federico Ardila, Thomas Bliem, and Dido Salazar.
\newblock Gelfand-{T}setlin polytopes and
  {F}eigin-{F}ourier-{L}ittelmann-{V}inberg polytopes as marked poset
  polytopes.
\newblock {\em J. Combin. Theory Ser. A}, 118(8):2454--2462, 2011.

\bibitem{BZ88}
A.~D. Berenstein and A.~V. Zelevinsky.
\newblock Tensor product multiplicities and convex polytopes in partition
  space.
\newblock {\em J. Geom. Phys.}, 5(3):453--472, 1988.

\bibitem{besson2022weight}
Marc Besson, Sam Jeralds, and Joshua Kiers.
\newblock Weight polytopes and saturation of demazure characters.
\newblock {\em arXiv preprint arXiv:2202.05405}, 2022.

\bibitem{deloera2004vertices}
Jes\'{u}s~A. De~Loera and Tyrrell~B. McAllister.
\newblock Vertices of {G}elfand-{T}setlin polytopes.
\newblock {\em Discrete Comput. Geom.}, 32(4):459--470, 2004.

\bibitem{ehrhart}
Eug\`ene Ehrhart.
\newblock Sur les poly\`edres rationnels homoth\'{e}tiques \`a {$n$}
  dimensions.
\newblock {\em C. R. Acad. Sci. Paris}, 254:616--618, 1962.

\bibitem{YoungTableaux}
William Fulton.
\newblock {\em Young tableaux}, volume~35 of {\em London Mathematical Society
  Student Texts}.
\newblock Cambridge University Press, Cambridge, 1997.
\newblock With applications to representation theory and geometry.

\bibitem{gao2019diameter}
Yibo Gao, Benjamin Krakoff, and Lisa Yang.
\newblock The diameter and automorphism group of {G}elfand-{T}setlin polytopes.
\newblock {\em Discrete Comput. Geom.}, 62(1):209--238, 2019.

\bibitem{KTT04}
R.~C. King, C.~Tollu, and F.~Toumazet.
\newblock Stretched {L}ittlewood-{R}ichardson and {K}ostka coefficients.
\newblock In {\em Symmetry in physics}, volume~34 of {\em CRM Proc. Lecture
  Notes}, pages 99--112. Amer. Math. Soc., Providence, RI, 2004.

\bibitem{mcallister2006thesis}
Tyrrell~B. McAllister.
\newblock {\em Applications of polyhedral geometry to computational
  representation theory}.
\newblock ProQuest LLC, Ann Arbor, MI, 2006.
\newblock Thesis (Ph.D.)--University of California, Davis.

\bibitem{M08}
Tyrrell~B. McAllister.
\newblock Degrees of stretched {K}ostka coefficients.
\newblock {\em J. Algebraic Combin.}, 27(3):263--273, 2008.

\bibitem{mcmullen}
P.~McMullen.
\newblock Lattice invariant valuations on rational polytopes.
\newblock {\em Arch. Math. (Basel)}, 31(5):509--516, 1978/79.

\bibitem{stanley1986two}
Richard~P. Stanley.
\newblock Two poset polytopes.
\newblock {\em Discrete Comput. Geom.}, 1(1):9--23, 1986.

\bibitem{EC2}
Richard~P. Stanley.
\newblock {\em Enumerative combinatorics. {V}ol. 2}, volume~62 of {\em
  Cambridge Studies in Advanced Mathematics}.
\newblock Cambridge University Press, Cambridge, 1999.
\newblock With a foreword by Gian-Carlo Rota and appendix 1 by Sergey Fomin.

\bibitem{Z73}
D.~P. \v{Z}elobenko.
\newblock {\em Compact {L}ie groups and their representations}.
\newblock Translations of Mathematical Monographs, Vol. 40. American
  Mathematical Society, Providence, R.I., 1973.
\newblock Translated from the Russian by Israel Program for Scientific
  Translations.

\end{thebibliography}
\end{document}